\newtheorem{theorem}{Theorem}
\newtheorem{proposition}[theorem]{Proposition}
\newtheorem {remark}[theorem]{Remark}
\newtheorem{definition}[theorem]{Definition}
\title[Stable piecewise  polynomial vector fields]{
Stable piecewise polynomial vector fields}
\author[Claudio Pessoa and Jorge Sotomayor]{}
\thanks{The first author is supported by FAPESP-–Brazil Project 2011/13152-8 and by Programa
Primeiros Projetos—-PROPe/UNESP. 
The second author is fellow of CNPq and participates in the
Project Fapesp 2008/02841-4.}
  \subjclass{Primary 34C35, 58F09; Secondary 34D30}
   \keywords{structural stability, piecewise vector fields, infinity,
   compactification}
\begin{document}
 \maketitle

\centerline{\scshape  Claudio Pessoa}
\medskip

{\footnotesize \centerline{Universidade Estadual Paulista,
UNESP--IBILCE}\centerline{Av. Cristov\~ao Colombo, 2265}
\centerline{ 15.054--000, S. J. Rio Preto, SP, Brasil }
\centerline{\email{pessoa@ibilce.unesp.br}}}

\medskip

\centerline{\scshape Jorge Sotomayor}
\medskip

{\footnotesize \centerline{ Instituto de Matem\'atica e Estat\'\i
stica, Universidade de S\~ao Paulo} \centerline{Rua do Mat\~ao
1010, Cidade Universit\'aria} \centerline{05.508-090, S\~ao Paulo,
SP, Brasil } \centerline{\email{sotp@ime.usp.br}}}

\medskip

\bigskip

\begin{quote}{\normalfont\fontsize{8}{10}\selectfont
{\bfseries Abstract.} Consider in $\mathbb R^2$ the semi-planes
$N=\{y>0\}$ and $S=\{y<0\}$ having as common boundary the straight
line $D=\{y=0\}$. In $N$ and $S$ are defined polynomial vector
fields $X$ and $Y$, respectively, leading to a discontinuous piecewise
 polynomial vector field $Z=(X,Y)$. This work pursues the
stability and the transition analysis of solutions of $Z$ between
$N$ and $S$, started by Filippov (1988) and Kozlova (1984) and
reformulated by Sotomayor--Teixeira (1995) in terms of the
regularization method. This method consists in analyzing a one
parameter family of continuous vector fields $Z_{\epsilon}$,
defined by averaging $X$ and $Y$. This family approaches $Z$ when
the parameter goes to zero. The results of Sotomayor--Teixeira and
Sotomayor--Machado (2002) providing conditions on $(X,Y)$ for the
regularized vector fields to be structurally stable on planar
compact connected regions are extended to discontinuous piecewise
polynomial vector fields on $\mathbb R^2$.  Pertinent genericity
results for vector fields satisfying the above  stability
conditions are also extended to the present case. A procedure for
the study of discontinuous piecewise vector fields at infinity
through a compactification is proposed here.
\par}
\end{quote}

\section{Introduction}
\label{sec:01}

One of the most accomplished stability theories for dynamical
systems is that of Andronov--Pontryagin \cite {AGLM} and Peixoto
\cite {P} for $C^1$ vector fields in the plane and on surfaces.
Elements of this theory provide characterization and genericity
results for structurally stable vector fields. Extensions of this
theory to the class of discontinuous, piecewise smooth, vector
fields have been provided by Filippov \cite {F} and Kozlova \cite
{K}.   The need for such an extended theory goes back to Andronov
et al. \cite {ACH}.

In \cite {F}, Filippov defined the rules (revisited below)  for
the transition of the orbits  crossing the line $D$ of
discontinuity which separates two regions $N$ and $S$ on which the
field, given  respectively by  $X$ and $Y$, is smooth. He also
prescribed when the orbit  slides  along $D$. This leads to an
orbit structure that is not always a flow on the surface obtained
gluing $N$  and $S$ along $D$. The work of Kozlova \cite {K, F}
pursues the setting established  by Filippov.

In \cite {ST},  Sotomayor and Teixeira  developed the {\it
regularization method}, taking as domain the sphere $S^2$ and the
equation as the discontinuity line $D$. This method consists in
defining a one parameter family of continuous vector fields that,
when the parameter goes to zero, approaches the discontinuous one.
To this end, a transition function $\varphi$ is used to average $X$
and $Y$ in order to get the family of continuous vector fields.
Sotomayor and Teixeira provided conditions on $Z=(X,Y)$, which imply
that the regularized vector fields are in the class of
Andronov--Pontryagin \cite {AGLM} and Peixoto \cite {P} for $C^1$
vector fields and consequently are structurally stable. Moreover,
Sotomayor and Machado \cite{MS} applied the method outlined above to
the case of a compact planar region $M$, with a smooth border
$\partial M$ and having as discontinuity line either a segment with
extremes on $\partial M$ or a closed curve disjoint of $\partial M$.
The conditions given in \cite {ST} are extended to this case and
their genericity, not discussed in \cite {ST}, is established.

Other developments in this direction can be found in
Garcia--So\-to\-ma\-yor \cite{GS}, where piecewise linear vector
fields are studied and in Buzzi--da Silva--Teixeira \cite{BST},
where the method of singular perturbations is used to study certain
discontinuous piecewise smooth  vector fields. For interesting
examples in applied subjects of discontinuous systems the reader is
addressed to \cite{ACH} and \cite{BCK}.

In this paper we deal with discontinuous piecewise vector fields $Z$
defined by a pair $(X,Y)$, where $X$ and $Y$ are polynomial vector
fields in the plane.

A {\it polynomial vector field $X$ in $\mathbb R^{2}$} is a vector
field of the form
\[
X=P(x,y)\frac{\partial}{\partial x}+Q(x,y)\frac{\partial}{\partial
y},
\]
where $P$ and $Q$ are polynomials in the variables $x$ and $y$
with real coefficients. We define the {\it degree} of the
polynomial vector field $X$ as $\max\{\deg P, \deg Q\}$. We can
write $P(x,y)=\sum a_{ij}x^iy^j$ and $Q(x,y)=\sum b_{ij}x^iy^j$,
$0\leq i+j\leq m$. Hence $X$ has degree $\leq m$. The
$l=(m+1)(m+2)$ real numbers $\{a_{i,j}, b_{ij}\}$ are called the
{\it coefficients of} $X$. The space of these vector fields,
endowed with the structure of affine $\mathbb R^l$-space where $X$
is identified with the $l$-tuple
$(a_{00},a_{10},\ldots,a_{0m},b_{00},\ldots,$ $b_{0m})$ of its
coefficients, is denoted by $\chi_m$.

Let $f:\mathbb R^2 {\rightarrow} {\mathbb R}$  be  the function
$f(x,y)=y$. In what follows we use the following notation: $D =
f^{-1}(0)$, $N = f^{-1}(0,\infty)$ and $S = f^{-1}(-\infty,0)$.

Let $\Omega_m$ be the space of vector fields $Z=(X,Y)$ defined by:
$$ Z(q) = \left\{ \begin{array}{c}             X(q)
\hspace{0.3cm}   \mbox{if}   \hspace{0.3cm}   f(q)   \geq    0,    \\
Y(q)   \hspace{0.3cm}   \mbox{if}   \hspace{0.3cm}   f(q)   \leq
0,
\end{array}\right.$$
where $X,Y \in \chi_m$ and $\deg X =\deg Y=m$. We write $Z
=(X,Y)$, which will be allowed
 to be bi-valued at points of $D$. In general the degrees of $X$ and $Y$ can be different,
 but in the present study,  to simplify the notation and some computations,
 we take them to be equal.

The Poincar\'e compactification of $X\in \chi_m$ is defined to be
the unique analytic vector field $\mathcal P (X)$ tangent to the
sphere $S^2=\{x^2+y^2+z^2=1\}$ whose restriction to the northern
hemisphere $S^2_+=\{S^2:z>0\}$ is given by $z^{m-1}\wp^*(X)$,
where $\wp$ is the central projection from $\mathbb R^2$ to
$S^2_+$, defined by $\wp(u,v)=(u,v,1)/\sqrt{u^2+v^2+1}$. See
\cite{G} for a verification of the uniqueness and analyticity of
$\mathcal P (X)$.

Through the Poincar\'e compactification, the discontinuous
piecewise polynomial vector field $Z=(X,Y)$ induces a
discontinuous piecewise analytic
 vector field tangent to $S^2$, with $S^1$
invariant, defined by $\mathcal P (Z)=(\mathcal P(X),\mathcal
P(Y))$.  Notice that, for $\mathcal P (Z)$ restricted to the
northern hemisphere, the function $f$ becomes $f(x,y,z)=y$ with
$(x,y,z)\in S^2_+$, the set of discontinuity is given by
$D=\{S^2:y=0\}$ and the semi-planes $N$ and $S$ become the
semi-hemispheres $N=\{S^2:z>0\;\mbox{and}\; y>0\}$ and
$S=\{S^2:z>0\;\mbox{and}\; y<0\}$, respectively. Thus, $\mathcal
P(Z)$ can be used to study the global structure of the orbits of
$Z$.

By a {\it transition function}  we mean a $C^{\infty}$ function
$\varphi: \mathbb R \to \mathbb R$ such that: $\varphi (t)  =  0$
if $t \leq -1$, $\varphi (t) = 1$  if  $t  \geq 1$ and
$\varphi^{\prime} (t) >  0$  if  $t  \in (-1,1)$.

\begin{definition} \label{1.1} $\frac{}{}$
The ${\varphi}_{\epsilon}$-{\it compactification  of} $Z = (X,Y) \in
\Omega_m$ is   the   one   parameter   family of $C^{\infty}$ vector
fields $\mathcal P(Z)_{\epsilon}$ in $S^2$ given by
$$\mathcal P( Z)_{\epsilon}(q) = (1 - \varphi_{\epsilon}(f(q)))\mathcal P(Y)(q) + \varphi_{\epsilon}(f(q))\mathcal P(X)(q),$$
where $\varphi_{\epsilon}(t)=\varphi(\frac {t} {\epsilon})$. \
\end{definition}

Denote  by  $\chi^r(S^2,S^1)$ the space of $C^{r}$ vector fields
on $S^2$, $r\geq 1$, such that $S^1$ is invariant by the flow of
the vector fields.

\begin{definition}
\label{def:01} $X\in \chi^r(S^2,S^1)$ is said to be structurally
stable if there is a neighborhood $V$ of $X$ and a map
$h:V\rightarrow Hom(S^2,S^1)$ (homeomorphisms of $S^2$ which
preserve $S^1$) such that $h_X=Id$ and $h_Y$ maps orbits of
$\mathcal P(X)$ onto orbits of $\mathcal P(Y)$, for every $Y\in
V$.
\end{definition}

\begin{definition}
\label{def:02} We call $\Sigma^r(S^2,S^1)$ the subset of
$\chi^r(S^2,S^1)$ of vector fields that have all their singularities
hyperbolic, all their periodic orbits hyperbolic and do not have
saddle connections in $S^2$ unless they are contained in $S^1$.
\end{definition}
We have that the elements of $\Sigma^r(S^2,S^1)$ are structurally
stable in the sense of definition \ref{def:01}.

In the next sections of this paper we will extend to the
 case of  discontinuous
piecewise polynomial vector fields in $\mathbb R^2$
  the study performed in \cite{ST,MS} for piecewise smooth vector fields.
  To this end we will give
sufficient conditions on $Z=(X,Y)\in \Omega_m$ which determine the
structural stability of its $\varphi_\epsilon$-compactification
$\mathcal P(Z)_\epsilon$ (Definition \ref{1.1}), for any transition
function $\varphi$ and small $\epsilon$. More precisely in Section
\ref{sec:03} will be defined a set $G_m$ (Definition \ref{4.1}) of
discontinuous piecewise polynomial vector fields that satisfy
sufficient conditions, reminiscent to those which define
$\Sigma^r(S^2,S^1)$, in order to have a structurally stable
$\varphi_\epsilon$-compactification. In Section \ref{sec:04}, the
genericity of $G_m$ will be established. A preliminary analysis of
relevant local aspects of discontinuous piecewise polynomial vector
fields is developed in Section \ref{sec:02}. There is studied the
effect of $\varphi_\epsilon$-compactification on singular points,
closed orbits  and polytrajectoris (Definition \ref{2.4}) in
$\mathbb R^2$ and in $S^1$ and on saddle separatrices.

\section{$\varphi_\epsilon$-Compactification of Singular Points,
Closed  and    Saddle Separatrix Poly-Trajectories } \label{sec:02}

In this section, using the notations, definitions and results of
\cite{ST, MS}, we define the regular and singular points of $Z$
(resp. $\mathcal P(Z)$), the closed {\it poly-trajectories} and
then we study the effects of the
$\varphi_\epsilon$-compactification on vector fields around these
points and poly-trajectories. The main goal here is to determine
the conditions for the $\varphi_\epsilon$-compactification to have
only regular points, hyperbolic singularities and hyperbolic
closed orbits.

\subsection{Regular and Singular Points}

 Given any $Z = (X,Y) \in \Omega_m$, following Filippov
terminology (as \cite {F}), we distinguish the following arcs in
$D$:
\begin{itemize}
\item Sewing  Arc  $(SW)$:  characterized  by  $(Xf)(Yf)  >  0$
(see Figure~\ref{fig:01} (a)). \item Escaping Arc $(ES)$: given by
the inequalities $Xf>0$ and $Yf<0$ (see Figure~\ref{fig:01} (b)).
\item Sliding Arc $(SL)$: given by the  inequalities $Xf<0$ and
$Yf>0$ (see Figure~\ref{fig:01} (c)).
\end{itemize}

As usual, here and in what follows, $Xf$ will denote the
derivative of the function $f$  in the direction of the vector
$X$,  i.e.,  $Xf=\langle \nabla f, X\rangle$.

\begin{figure}[h!]
\begin{center}
\psfrag{A}[l][B]{\tiny$(a)$} \psfrag{B}[l][B]{\tiny$(b)$}
\psfrag{C}[l][B]{\tiny$(c)$} \psfrag{N}[l][B]{\tiny$N$}
\psfrag{S}[l][B]{\tiny$S$} \psfrag{D}[l][B]{\tiny$D$}
\includegraphics[height=1.2in,width=2.5in]{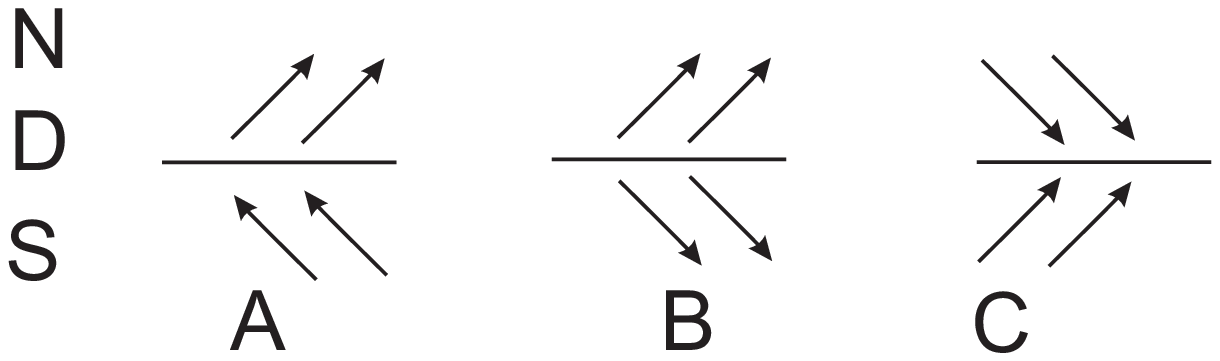}
\end{center}
\caption{Arcs on $D$} \label{fig:01}
\end{figure}

On the arcs $ES$ and $SL$ we define the {\it Filippov vector field}
$F_{Z}$ associated  to $Z = (X,Y)$, as follows: if $p \in SL$ or
$ES$, then $F_{Z}(p)$  denotes  the vector in the cone spanned by
$X(p)$  and $Y(p)$ that is
 tangent to $D$,
see Figure~\ref{fig:02}.

\begin{figure}[h!]
\begin{center}
\psfrag{X}[l][B]{\tiny$X(p)$} \psfrag{Y}[l][B]{\tiny$Y(p)$}
\psfrag{P}[l][B]{\tiny$p$} \psfrag{N}[l][B]{\tiny$N$}
\psfrag{S}[l][B]{\tiny$S$} \psfrag{D}[l][B]{\tiny$D$}
\psfrag{F}[l][B]{\tiny$F_{Z}(p)$}
\includegraphics[height=1in,width=1.8in]{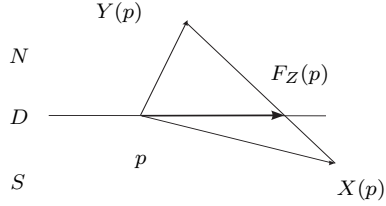}
\end{center}
\caption{Filippov vector field} \label{fig:02}
\end{figure}

\begin{definition}\label{2.1}
A  point  $p  \in  D$  is  called a {\it D-regular point of} $Z$ if
one    of       the    following    conditions  holds:
\begin{enumerate}
\item  $Xf(p).Yf(p) > 0$. This   means  that  $p  \in  SW$; \item
$Xf(p).Yf(p) < 0$ but $det[X,Y](p) \neq 0$. This means that $p$
belongs either to $SL$ or $ES$ and it is not a singular point of
$F_{Z}$ (see Figure~\ref{fig:03}).
\end{enumerate}
\end{definition}

Now, we define the notion of hyperbolicity for the singular points
of $F_{Z}$.

\begin{definition}\label{2.2}
A point $p \in D$ is called a {\it singular point of} $F_{Z}$
if\break $Xf(p).Yf(p) < 0$ and $det[X,Y](p) = 0$. If we have
$d(det[X,Y]|_{D})(p) \neq 0$, then $p$ is called a {\it hyperbolic
singular point of} $F_{Z}$. Here $d(det[X,Y]|_{D})(p)$ denote the
derivative of $det[X,Y]|_{D}$ at point $p$.

Let $p \in D$ be a hyperbolic singular point of $F_{Z}$. The point
$p$ is called a {\it saddle} if $p \in SL$ and
$d(det[X,Y]|_{D})(p) > 0$ or $p \in ES$ and $d(det[X,Y]|_{D})(p) <
0$. The point $p$ is called a {\it node} if $p \in SL$ and
$d(det[X,Y]|_{D})(p) < 0$ or if $p \in ES$ and
$d(det[X,Y]|_{D})(p) > 0$ (see Figure~\ref{fig:04}).
\end{definition}

In the next definition we extend the notion of hyperbolic singular
point, located  in D,  for $Z$.

\begin{definition}\label{2.3}
A point $p \in D$ is  an  {\it elementary D-singular point of} $Z
= (X,Y)$ if one of  the  following conditions is satisfied:
\begin{enumerate}
\item The point $p$ is a  {\it fold  point of} $Z = (X,Y)$. This
means that: either $p$ is a fold point of $X$: $Yf(p) \neq 0,
Xf(p) = 0$ and $X^2f(p) \neq 0$; or $p$ is a fold point of $Y$:
$Xf(p)  \neq 0, Yf(p) = 0 $ and $Y^2f(p) \neq 0$ (see
Figure~\ref{fig:05}); \item The point $p$  is  a hyperbolic
singular point of $F_{Z}$.
\end{enumerate}
\end{definition}

The definitions above can be reformulated in a similar way in the
case of  discontinuous piecewise analytic vector field $\mathcal
P(Z)$ in $S^2$.

To determine the behavior of singular points and periodic orbits of
$\mathcal P(Z)$ we will obtain an expression of $\mathcal P(Z)$ in
polar coordinates. Take coordinates $(\theta,\rho)$, $2\pi$-periodic
in $\theta$, defined by the covering map from $(-1,1)\times \mathbb
R$ onto $S^2\setminus \{(0,0,\pm 1\}$, given by
$(\theta,\rho)\mapsto (x,y,z)=(1+\rho^2)^{-1/2}(\cos \theta,\sin
\theta, \rho)$.

The expression for $z^{m-1}\wp^*(X)$, $X=(P,Q)\in\chi_m$, in these
coordinates is
\[
(1+\rho^2)^{(1-m)/2}\left[\left(\sum \rho^i
A_{m-i}(\theta)\right)\frac{\partial}{\partial
\theta}-\rho\left(\sum \rho^iR_{m-i}(\theta)
\right)\frac{\partial}{\partial \rho}\right],
\]
where $i=0,1,\ldots,m$ and
\[
A_k(\theta)=A_k(X,\theta)=Q_k(\cos \theta,\sin
\theta)\cos\theta-P_k(\cos \theta,\sin \theta)\sin\theta, \]
\[
R_k(\theta)=R_k(X,\theta)=P_k(\cos \theta,\sin
\theta)\cos\theta+Q_k(\cos \theta,\sin \theta)\sin\theta,
\]
with $P_k=\sum a_{ij}x^iy^j$, $Q_k=\sum b_{ij}x^iy^j$, $i+j=k$.
Now, we perform a change in the time variable to remove the factor
$(1+\rho^2)^{(1-m)/2}$ and to obtain a vector field defined in the
whole plane $(\theta, \rho)$, i.e. we have the vector field
\begin{equation}
\label{eq:01} \left(\sum \rho^i
A_{m-i}(\theta)\right)\frac{\partial}{\partial
\theta}-\rho\left(\sum \rho^iR_{m-i}(\theta)
\right)\frac{\partial}{\partial \rho},
\end{equation}
with $i=0,1,\ldots,m$.
 Note that we also can obtain \eqref{eq:01} directly from $X=(P,Q)$ introducing in the plane $(x,y)$ the
change of variables $x=\cos\theta/\rho$, $y= \sin \theta/\rho$.
Moreover, the axis $\theta$, i.e. $\{(\theta,\rho):\rho=0\}$, is
invariant by \eqref{eq:01} and corresponds to the points at
 infinity of $\mathbb R ^2$.
 Therefore, to study the behavior of solutions of $\mathcal
P(Z)$, $Z=(X,Y)\in \Omega_m$ with $X=(P_1,Q_1)$ and $Y=(P_2,Q_2)$,
is equivalent by \eqref{eq:01} to study the discontinuous
piecewise trigonometric vector field
\begin{equation}
\label{eq:03} \left\{ \begin{array}{l}             \left(\sum
\rho^i A_{1,m-i}(\theta),-\rho\sum \rho^iR_{1,m-i}(\theta)
\right), \hspace{0.3cm} \mbox{if} \hspace{0.3cm} \theta\in
[0,\pi], \rho\geq 0,    \\ \\ \left(\sum \rho^i
A_{2,m-i}(\theta),-\rho\sum \rho^iR_{2,m-i}(\theta) \right),
\hspace{0.3cm} \mbox{if}
 \hspace{0.3cm} \theta\in [\pi,2\pi], \rho\geq
0,
\end{array}\right.
\end{equation}
with $i=0,1,\ldots,m$, where $A_{1,k}(\theta)=A_{k}(X,\theta)$,
$A_{2,k}(\theta)=A_{k}(Y,\theta)$,
$R_{1,k}(\theta)=R_{k}(X,\theta)$ and
$R_{2,k}(\theta)=R_{k}(Y,\theta)$.

We remark that $S^1\cap D=\{(\pm 1,0,0)\}$. Hence, if $p\in
S^1\cap D$ is not a singular point of $\mathcal P(X)$ and
$\mathcal P (Y)$ then, as $S^1$ is invariant by $\mathcal P(Z)$
and so by $\mathcal P(Z)_\epsilon$ (Definition \ref{1.1}) , it
follows that $p$ is a point of sewing arc $SW$ or $p$ is a
singular point of the Filippov vector field $F_{\mathcal P(Z)}$.

Suppose that $(1,0,0)$ is a singular point of $F_{\mathcal P(Z)}$.
This point corresponds to the point $(0,0)$ in the chart
$(\theta,\rho)$ and in this chart $D=\{(0,\rho): \rho \geq
0\}\cup\{(\pi,\rho): \rho\geq 0\}\cup\{(2\pi,\rho): \rho \geq
0\}$. Therefore, by \eqref{eq:03}, it follows that $\det[\mathcal
P(X),\mathcal P(Y)]|_{(0,\rho)}=$
\[
-\rho\left[\sum \rho^i A_{1,m-i}(0)\sum \rho^i R_{2,m-i}(0)-\sum
\rho^i R_{1,m-i}(0)\sum \rho^i A_{2,m-i}(0)\right],
\]
and so $\displaystyle\frac{d}{d\rho}\left(\det[\mathcal
P(X),\mathcal P(Y)]|_{(0,\rho)}\right)(0)=$
\[
R_{1,m}(0)A_{2,m}(0)-A_{1,m}(0)R_{2,m}(0).
\]
Hence, $(1,0,0)$ is a hyperbolic singular point of $F_{\mathcal
P(Z)}$ if and only if
\begin{equation}
\label{eq:04}
P_{1,m}(1,0)Q_{2,m}(1,0)-Q_{1,m}(1,0)P_{2,m}(1,0)\neq 0,
\end{equation}
where $P_{k,m}$ and $Q_{k,m}$ are the homogeneous parts of degree
$m$ of $P_k$ and $Q_k$, respectively, $k=1,2$.

Now, in a similar way we have that if $(-1,0,0)$ is a singularity
of $F_{\mathcal P(Z)}$ then it is hyperbolic if \eqref{eq:04}
holds.

The proofs of the propositions below are analogous to those proofs
of the respective propositions (Proposition $6$ page $230$,
Proposition $8$ page $231$ and Proposition $9$ pag. $233$)
established in \cite{MS}.

\begin{proposition}\label{prop1.1} Let  $p \in S^2_+\cup S^1$ be a $D$-regular point  of  $\mathcal P(Z)$ with
$Z=(X,Y) \in \Omega_m$. Then, given a transition function
$\varphi$, there  exists  a neighborhood $V$ of $p$ and
$\epsilon_{0} > 0$ such that for $0 < \epsilon \leq \epsilon_{0}$,
$\mathcal P(Z)_{\epsilon}$ has no singular points in $V$ (see
Figure~\ref{fig:03}).
\end{proposition}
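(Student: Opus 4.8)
The plan is to split into cases according to which type of $D$-regular point $p$ is, and in each case to use the fact that $\mathcal P(Z)_\epsilon$ converges to one of $\mathcal P(X)$, $\mathcal P(Y)$ on compact subsets of $N$, $S$ respectively, while on a neighborhood of $D$ it is a convex combination controlled by $\varphi_\epsilon$. First I would observe that the statement is local, so it suffices to work in the chart $(\theta,\rho)$ when $p \in S^1$, or in the original $(x,y)$-plane when $p \in S^2_+\setminus S^1$; in either case $f$ is (up to sign) the second coordinate and $\mathcal P(Z)_\epsilon = (1-\varphi_\epsilon(f))\,\mathcal P(Y) + \varphi_\epsilon(f)\,\mathcal P(X)$ on the relevant chart.

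\textbf{Case 1: $p\in SW$, i.e. $Xf(p)\cdot Yf(p)>0$.} Here I would assume without loss of generality $Xf(p)>0$ and $Yf(p)>0$. Computing $(\mathcal P(Z)_\epsilon)f$ at a point $q$ near $p$, the terms involving $\varphi_\epsilon'$ cancel because $\varphi_\epsilon'(f)\,f$ multiplies $f$ times the derivative of $\varphi_\epsilon$ applied to the \emph{direction} of $\mathcal P(Z)_\epsilon$ — more precisely $(\mathcal P(Z)_\epsilon)f = (1-\varphi_\epsilon(f))\,(\mathcal P(Y))f + \varphi_\epsilon(f)\,(\mathcal P(X))f$, a genuine convex combination of two quantities that are both positive near $p$ by continuity. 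Hence $(\mathcal P(Z)_\epsilon)f(q)>0$ on a neighborhood $V$ of $p$, independently of $\epsilon$, so $\mathcal P(Z)_\epsilon$ has no singular point in $V$. (The argument when $q\in N$ uses that $\mathcal P(X)f>0$ near $p$; when $q\in S$ that $\mathcal P(Y)f>0$ near $p$; when $f(q)=0$ both coincide with the $SW$ inequality.)

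\textbf{Case 2: $p$ lies in $SL$ or $ES$ with $\det[\mathcal P(X),\mathcal P(Y)](p)\neq 0$.} Say $p\in SL$, so $Xf(p)<0$, $Yf(p)>0$. On $N$ near $p$, $\mathcal P(Z)_\epsilon$ is eventually equal to $\mathcal P(X)$ (for $f(q)\geq\epsilon$), which is nonzero near $p$ since $\mathcal P(X)f(p)\neq 0$; symmetrically on $S$. The only region needing care is the strip $|f(q)|<\epsilon$. There I would write $\mathcal P(Z)_\epsilon(q) = \alpha\,\mathcal P(Y)(q)+\beta\,\mathcal P(X)(q)$ with $\alpha,\beta>0$, $\alpha+\beta=1$, and argue this is a nonzero vector: a vanishing convex combination would force $\mathcal P(X)(q)$ and $\mathcal P(Y)(q)$ to be antiparallel, hence $\det[\mathcal P(X),\mathcal P(Y)](q)=0$; but this determinant is nonzero at $p$ hence on a neighborhood, so shrinking $V$ and then choosing $\epsilon_0$ so that the strip $|f|<\epsilon_0$ meets $V$ only where the determinant is nonzero gives the conclusion. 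The case $p\in ES$ is identical after reversing a sign.

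\textbf{Main obstacle.} The delicate point is the uniformity in $\epsilon$ and the interplay between shrinking $V$ and shrinking $\epsilon_0$: one must fix $V$ first using only the $D$-regularity inequalities (which are open conditions on $\mathbb R^2$ or on the chart), verify the relevant inequality/nonvanishing holds on all of $\overline V$, and only then pick $\epsilon_0$ small enough that the transition strip $\{|f|<\epsilon_0\}\cap V$ stays inside the good region. A secondary subtlety, when $p\in S^1$, is checking that the chart computation is legitimate — that $\mathcal P(Z)_\epsilon$ in coordinates $(\theta,\rho)$ is, after the same positive time rescaling used to pass from $z^{m-1}\wp^*(X)$ to \eqref{eq:01}, again a convex combination of the rescaled fields with the \emph{same} weights $\varphi_\epsilon(f)$, since the rescaling factor $(1+\rho^2)^{(1-m)/2}$ is common to both $\mathcal P(X)$ and $\mathcal P(Y)$ and hence factors out of the combination; this is why the structure of Definition \ref{1.1} is preserved and the planar argument transfers verbatim to infinity. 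Everything else is the routine continuity bookkeeping that I would not carry out in detail, and indeed the proposition's proof is stated to parallel Proposition $6$ of \cite{MS}.
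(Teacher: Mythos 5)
Your argument is correct and is essentially the paper's: the paper simply defers this proposition to Proposition 6 of Machado--Sotomayor \cite{MS}, whose proof is exactly the convex-combination argument you reconstruct (sign of $(\mathcal P(Z)_\epsilon)f$ on the sewing arc, linear independence via $\det[\mathcal P(X),\mathcal P(Y)]\neq 0$ on the sliding/escaping arcs), transferred to the chart $(\theta,\rho)$ at infinity where the common positive rescaling factor preserves the structure of Definition \ref{1.1}. In fact your case analysis shows the conclusion holds on a fixed $V$ for every $\epsilon>0$, so the choice of $\epsilon_0$ is not even a constraint here.
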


\begin{figure}[h!]
\begin{center}
\includegraphics[height=1.5in,width=1.5in]{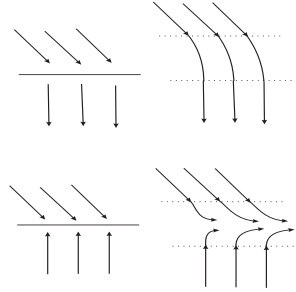}
\end{center}
\caption{D-regular points and their
$\varphi_\epsilon$-compactification} \label{fig:03}
\end{figure}

\begin{proposition}\label{prop1.5}
Given $Z = (X,Y) \in \Omega_m$, let $p$ be a hyperbolic singular
point of $F_{\mathcal P(Z)}$. Then, given a transition function
$\varphi$, there is  a neighborhood $V$ of $p$ in $S^2_+\cup S^1$
and $\epsilon_{0} > 0$ such   that   for     $0 < \epsilon \leq
\epsilon_{0}$, $\mathcal P(Z)_{\epsilon}$ has near $p$  a unique
singular point which is a hyperbolic saddle or a hyperbolic node
(see Figure~\ref{fig:04}).
\end{proposition}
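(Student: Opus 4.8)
The plan is to argue locally around $p$, following \cite{MS}; the one genuinely new point is to fit a singular point lying at infinity into the same local model used there for the interior case. First I would fix smooth coordinates $(u,v)$ on $S^{2}_{+}\cup S^{1}$ near $p$ such that $p=(0,0)$, $D=\{v=0\}$ with $N$ the side $v>0$ and $S$ the side $v<0$, the defining function reads $f=v\,h(u,v)$ with $h$ smooth and $h(0,0)>0$, and $\mathcal P(X)=(X_{1},X_{2})$, $\mathcal P(Y)=(Y_{1},Y_{2})$ are smooth. If $p\in S^{2}_{+}$ one uses the ambient coordinates $(x,y)$; if $p\in S^{1}\cap D=\{(\pm1,0,0)\}$ one uses the chart $(\theta,\rho)$ of \eqref{eq:03}, where $D$ is locally $\{\theta=0\}$, $(u,v)=(\rho,\theta)$, and $f=\sin\theta\,(1+\rho^{2})^{-1/2}$ is $\theta$ times a positive smooth factor. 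Since $\nabla f$ is a positive multiple of $(0,1)$ along $D$, the hypotheses become $X_{2}(0,0)\,Y_{2}(0,0)<0$, $\det[X,Y](0,0)=(X_{1}Y_{2}-X_{2}Y_{1})(0,0)=0$, and $\bigl(\det[X,Y]|_{\{v=0\}}\bigr)'(0)\neq0$. Replacing $Z$ by $-Z$---which reverses time (hence preserves the type of each singular point) and exchanges $SL$ with $ES$---I may assume the sliding case $X_{2}(0,0)<0<Y_{2}(0,0)$.

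\textbf{Location and uniqueness of the singular point.} Since $X_{2},Y_{2}$ are nonzero near the origin, $\mathcal P(X)$ and $\mathcal P(Y)$ are nonzero there; as $\mathcal P(Z)_{\epsilon}$ equals $\mathcal P(X)$ on $\{f\ge\epsilon\}$ and $\mathcal P(Y)$ on $\{f\le-\epsilon\}$, all its singular points near $p$ lie in the strip $\{|f|<\epsilon\}$, which I rescale by $v=\epsilon\sigma$. Vanishing of the $v$-component of $\mathcal P(Z)_{\epsilon}$ reads
\[
\bigl(1-\varphi(\sigma h(u,\epsilon\sigma))\bigr)Y_{2}(u,\epsilon\sigma)+\varphi(\sigma h(u,\epsilon\sigma))X_{2}(u,\epsilon\sigma)=0 ,
\]
and on its zero set $\varphi(\sigma h)=\alpha_{0}(u):=Y_{2}(u,0)/(Y_{2}(u,0)-X_{2}(u,0))\in(0,1)$, so $\varphi'>0$ there and the $\sigma$-derivative of the left-hand side, namely $\varphi'(\sigma h)\,h\,(X_{2}-Y_{2})+O(\epsilon)$, is nonzero at $\epsilon=0$. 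By the implicit function theorem this equation has, for small $\epsilon$, a unique solution $\sigma=\sigma(u,\epsilon)$ near the origin, smooth in $(u,\epsilon)$. Substituting it into the vanishing of the $u$-component and letting $\epsilon\to0$ gives $(1-\alpha_{0}(u))Y_{1}(u,0)+\alpha_{0}(u)X_{1}(u,0)=0$, that is,
\[
F_{0}(u):=\frac{\det[X,Y]|_{\{v=0\}}(u)}{Y_{2}(u,0)-X_{2}(u,0)}=0 ,
\]
the equation for the Filippov field $F_{\mathcal P(Z)}$ to vanish along $D$. As $p$ is a hyperbolic singular point of $F_{\mathcal P(Z)}$ and $F_{0}$ equals $\det[X,Y]|_{\{v=0\}}$ divided by the nonvanishing function $Y_{2}(\cdot,0)-X_{2}(\cdot,0)$, we have $F_{0}(0)=0$ and $F_{0}'(0)\neq0$; a second application of the implicit function theorem---now in the parameter $\epsilon$---produces $\epsilon_{0}>0$ and a neighborhood $V$ of $p$ such that for $0<\epsilon\le\epsilon_{0}$ the field $\mathcal P(Z)_{\epsilon}$ has exactly one singular point $q_{\epsilon}$ in $V$, and $q_{\epsilon}\to p$ as $\epsilon\to0$.

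\textbf{Hyperbolicity and type.} It remains to linearize. Differentiating $\mathcal P(Z)_{\epsilon}=(1-\varphi(f/\epsilon))\mathcal P(Y)+\varphi(f/\epsilon)\mathcal P(X)$, the $\partial_{v}$-column of the Jacobian carries the factor $\varphi'(f/\epsilon)\,\partial_{v}f/\epsilon$, equal at $q_{\epsilon}$ to $(\beta+o(1))/\epsilon$ with $\beta=\varphi'\bigl(\varphi^{-1}(\alpha_{0}(0))\bigr)\,h(0,0)>0$, whereas the $\partial_{u}$-column stays bounded (its $\varphi'$-term is $\varphi'(f/\epsilon)\,\partial_{u}f/\epsilon=O(1)$, since $\partial_{u}f=v\,h_{u}=O(\epsilon)$ on the strip). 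Hence $\operatorname{tr}D\mathcal P(Z)_{\epsilon}(q_{\epsilon})=\bigl(\beta\,(X_{2}(0,0)-Y_{2}(0,0))+o(1)\bigr)/\epsilon$ is of exact order $1/\epsilon$ (recall $X_{2}(0,0)\neq Y_{2}(0,0)$), while a direct computation---in which the potentially dominant $1/\epsilon$- and $(\varphi')^{2}$-contributions to the determinant cancel precisely because $\det[X,Y](0,0)=0$---gives that $\epsilon\,\det D\mathcal P(Z)_{\epsilon}(q_{\epsilon})$ tends to a nonzero multiple of $\bigl(\det[X,Y]|_{\{v=0\}}\bigr)'(0)$, nonzero by hyperbolicity. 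Thus $\det D\mathcal P(Z)_{\epsilon}(q_{\epsilon})$ is also of exact order $1/\epsilon$ with a fixed sign; since the trace is of order $1/\epsilon$, the discriminant $(\operatorname{tr})^{2}-4\det$ is positive for $\epsilon$ small, so $D\mathcal P(Z)_{\epsilon}(q_{\epsilon})$ has two distinct real nonzero eigenvalues. Therefore $q_{\epsilon}$ is hyperbolic, and is a saddle when $\det D\mathcal P(Z)_{\epsilon}(q_{\epsilon})<0$ and a node otherwise, as in Figure~\ref{fig:04}.

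\textbf{Main obstacle.} The only substantial step is this last computation---verifying that, thanks to $\det[X,Y](p)=0$, the $1/\epsilon$- and $(\varphi')^{2}$-terms of $\det D\mathcal P(Z)_{\epsilon}(q_{\epsilon})$ cancel, so that the surviving leading term is controlled by $\bigl(\det[X,Y]|_{D}\bigr)'(p)$; this is precisely the estimate already carried out in \cite{MS} for the planar case. Fitting a point of $S^{1}$ into the interior local model via \eqref{eq:03}, the only new ingredient, is then immediate.
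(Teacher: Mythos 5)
Your proof is correct and takes essentially the same route as the paper, whose own proof consists of invoking the argument of Proposition 8 of \cite{MS}, the only new ingredient being the treatment of the points $(\pm1,0,0)$ through the chart $(\theta,\rho)$ of \eqref{eq:03} and condition \eqref{eq:04} — precisely the reduction you carry out before running the implicit-function-theorem and trace/determinant estimates. One cosmetic remark: in the determinant of the linearization the $(\varphi')^{2}/\epsilon^{2}$ term is absent simply because the correction $\frac{\varphi'(f/\epsilon)}{\epsilon}\,(\mathcal P(X)-\mathcal P(Y))\otimes\nabla f$ has rank one, and the surviving $1/\epsilon$ term is proportional to $d(\det[X,Y]|_{D})(p)$ because $\mathcal P(X)$ and $\mathcal P(Y)$ are antiparallel at the singular point $q_\epsilon$ (rather than "because $\det[X,Y](p)=0$" per se); this does not affect your conclusion, which you in any case defer to \cite{MS} exactly as the paper does.
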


\begin{figure}[h!]
\begin{center}
\includegraphics[height=1.5in,width=1.5in]{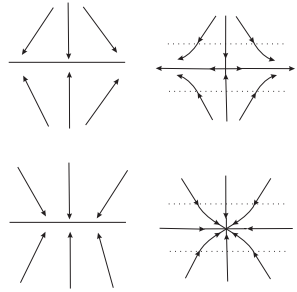}
\end{center}
\caption{$D$-singular points and their regularizations}
\label{fig:04}
\end{figure}

\begin{proposition}\label{prop1.6} Let $p$ be a fold point of  $\mathcal P(Z)$ with $Z  =  (X,Y)\in \Omega_m$.
Then, given a transition function $\varphi$, there is a
neighborhood $V$ of $p$ and $\epsilon_{0} > 0$ such  that  for  $
0 < \epsilon  \leq \epsilon_{0}, \mathcal P(Z)_{\epsilon}$ has no
singular points in $V$ (see Figure~\ref{fig:05}).
\end{proposition}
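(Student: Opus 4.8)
The plan is to reduce the statement to a direct local computation, exactly as in the analogous result (Proposition $9$, p.~$233$) of \cite{MS}, but keeping track of the fact that here we work with $\mathcal P(Z)_\epsilon$ on $S^2_+\cup S^1$. First I would observe that a fold point $p$ of $\mathcal P(Z)$ lies, by definition, either in $S^2_+$ or on $S^1$, and that in either case there are local coordinates adapted to $D$ in which $f(x,y)=y$; on $S^1$ one uses the polar chart $(\theta,\rho)$ introduced above, in which $D$ becomes a union of vertical lines and $\mathcal P(Z)$ is the trigonometric field \eqref{eq:03}, while on $S^2_+$ one may use the original $(x,y)$-coordinates. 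Thus it suffices to prove the statement for a discontinuous piecewise smooth field $Z=(X,Y)$ with $f(x,y)=y$, having a fold point at the origin, and then transport the conclusion back through the chart.

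Assume, after possibly interchanging the roles of $N$ and $S$, that $p=0$ is a fold point of $X$: $Yf(0)\neq 0$, $Xf(0)=0$, $X^2f(0)\neq 0$. Writing $X=(P_1,Q_1)$, these conditions say $Q_1(0)=0$, $Q_2(0)\neq 0$, and $X^2f(0)=\langle\nabla Q_1,X\rangle(0)=P_1(0)\,\partial_xQ_1(0)\neq 0$. Now I compute $\mathcal P(Z)_\epsilon$ near $0$:
\[
\mathcal P(Z)_\epsilon = (1-\varphi_\epsilon(y))\,\mathcal P(Y) + \varphi_\epsilon(y)\,\mathcal P(X),
\]
so a singular point in a small neighborhood must have both components vanishing. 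The second component is
\[
(1-\varphi_\epsilon(y))Q_2 + \varphi_\epsilon(y)Q_1.
\]
Since $Q_2(0)\neq 0$ and $Q_1(0)=0$, for $y$ with $|y|\le\epsilon$ this has the sign of $Q_2(0)$ when $\varphi_\epsilon(y)$ is bounded away from $1$, and for $|y|\ge\epsilon$ it coincides with $Q_2$ (if $y\le-\epsilon$) or $Q_1$ (if $y\ge\epsilon$), neither of which vanishes near $0$ — here the key point is that $Q_1(0)=0$ but $X^2f(0)\neq 0$ forces $Q_1$ restricted to the line $y=\epsilon$ to be bounded away from zero once $\epsilon$ is small, because along $\{y=\epsilon\}$ one has $Q_1(x,\epsilon)=Q_1(x,0)+\epsilon\,\partial_yQ_1(x,0)+O(\epsilon^2)$ and the transversality of the fold controls $Q_1(x,0)$ near $x=0$. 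Making this quantitative (choosing first the neighborhood $V$, then $\epsilon_0$ depending on $V$, $\varphi$, and the jets of $X,Y$ at $0$) shows the second component of $\mathcal P(Z)_\epsilon$ never vanishes in $V$ for $0<\epsilon\le\epsilon_0$, whence $\mathcal P(Z)_\epsilon$ has no singular point there.

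The one genuine obstacle is the uniformity in $\epsilon$ on the transition strip $\{|y|\le\epsilon\}$: as $\epsilon\to0$ this strip shrinks, but $\varphi_\epsilon$ oscillates arbitrarily fast, so one cannot treat $\mathcal P(Z)_\epsilon$ as a small perturbation of a fixed field. The remedy, standard from \cite{ST,MS}, is to rescale the normal variable by setting $y=\epsilon s$ with $s\in[-1,1]$: in the coordinates $(x,s)$ the strip becomes fixed, $\varphi_\epsilon(y)=\varphi(s)$ is $\epsilon$-independent, and the rescaled second component is
\[
(1-\varphi(s))Q_2(x,\epsilon s) + \varphi(s)Q_1(x,\epsilon s),
\]
which converges uniformly on compacta as $\epsilon\to0$ to $(1-\varphi(s))Q_2(x,0)+\varphi(s)Q_1(x,0)=(1-\varphi(s))Q_2(x,0)$ (using $Q_1(\cdot,0)\equiv$ the restriction whose value at $0$ is zero — more precisely one keeps the full expression and uses $Q_2(0)\neq0$). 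Since the limit is nonzero at $(x,s)=(0,s)$ for every $s$, a compactness argument on $s\in[-1,1]$ and continuity in $x$ give a neighborhood and an $\epsilon_0$ on which it stays nonzero; outside the strip the argument is the elementary one above. Finally I would note that the orbit picture in Figure~\ref{fig:05} — the regularized field crossing $D$ without singularities near a fold — follows at once, and I would record that the whole argument is verbatim that of \cite{MS} once the problem has been localized in the chart $(\theta,\rho)$, which is why the proof is only sketched.
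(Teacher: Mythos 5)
Your reduction to local coordinates and the overall strategy (rescale the strip, argue componentwise) is reasonable in spirit, but the key claim on which the whole proof rests is false: the second (normal) component of $\mathcal P(Z)_{\epsilon}$ does \emph{not} stay bounded away from zero near a fold point. Writing $X=(P_1,Q_1)$, the fold conditions give $Q_1(p)=0$ and $X^2f(p)=P_1(p)\,\partial_x Q_1(p)\neq 0$, hence $\partial_x Q_1(p)\neq 0$; so the curve $\{Xf=Q_1=0\}$ crosses $D$ \emph{transversally} at $p$ and, in particular, $Q_1(\cdot,0)$ has a simple zero at $p$ and $Q_1$ restricted to each line $\{y=\epsilon\}$ vanishes at some $x=g(\epsilon)\to 0$. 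Thus in any fixed neighborhood $V$ of $p$, in the region $\{y\geq\epsilon\}$ where $\mathcal P(Z)_{\epsilon}=\mathcal P(X)$, the second component vanishes along this curve; transversality of the fold works exactly against your assertion that it ``controls $Q_1(x,0)$ near $x=0$'' so as to keep it away from zero. The same defect appears in your rescaled argument: at $s=1$ the limit $(1-\varphi(s))Q_2(x,0)+\varphi(s)Q_1(x,0)$ equals $Q_1(x,0)$, which vanishes at $x=0$, so the limit is not nonzero ``for every $s$''. Consequently no argument using the normal component alone can prove the proposition.

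The conclusion is nevertheless correct, and the repair requires both components. At a fold point, $X^2f(p)\neq 0$ forces $X(p)\neq 0$, and $Xf(p)=0$, $Yf(p)\neq 0$ say that $X(p)$ is a nonzero vector tangent to $D$ while $Y(p)$ is transversal to $D$; hence $\det[X,Y](p)=P_1(p)Q_2(p)\neq 0$ and $X$, $Y$ are nonvanishing and linearly independent on a whole neighborhood $V$ of $p$ (the same holds for $\mathcal P(X)$, $\mathcal P(Y)$ in the chart). Since $\mathcal P(Z)_{\epsilon}(q)=(1-\varphi_{\epsilon}(f(q)))\mathcal P(Y)(q)+\varphi_{\epsilon}(f(q))\mathcal P(X)(q)$ is at each point a convex combination with coefficient $t=\varphi_{\epsilon}(f(q))\in[0,1]$, a zero in $V$ would force either $\mathcal P(X)(q)=0$, or $\mathcal P(Y)(q)=0$, or (for $t\in(0,1)$) linear dependence of $\mathcal P(X)(q)$ and $\mathcal P(Y)(q)$ — all impossible in $V$. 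This is the content of the argument in \cite{ST,MS} (Proposition $9$ of \cite{MS}) to which the paper appeals, and it makes the $\epsilon$-uniformity issue you worried about disappear: no rescaling of the strip is needed, since the nonvanishing holds for every $\epsilon$ once $V$ is fixed. Note also that points of $S^1\cap D$ are never folds (they are sewing points or singularities of $F_{\mathcal P(Z)}$), so the polar chart case reduces to the same planar computation.
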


\begin{figure}[h!]
\begin{center}
\includegraphics[height=1.5in,width=1.5in]{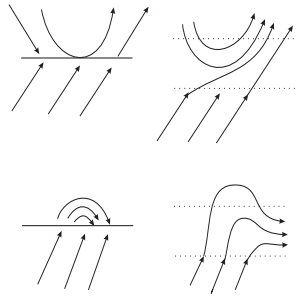}
\end{center}
\caption{Fold points and their
$\varphi_\epsilon$-compactification} \label{fig:05}
\end{figure}

\subsection{Closed and Saddle Connections  Poly-Trajectories}
\begin{definition}\label{2.4}
A continuous curve $\gamma$ consisting of regular trajectory arcs
of $X$ and/or of $Y$ and/or of $F_{Z}$ is called a {\it
poly-trajectory} if:
\begin{enumerate}
\item  $\gamma$ has arcs of at least two fields among $X, Y$ and
$F_{Z}$, or consists of a single arc of $F_{Z}$;

\item  the transition between arcs of $X$ and $Y$ happens on the
sewing arc; \item the transition between arcs of $X$ or $Y$ and
$F_{Z}$ occurs at fold points or regular points of the sliding or
the escaping arcs, preserving the sense of the arcs (see
Figure~\ref{fig:08}).
\end{enumerate}
\end{definition}

\begin{figure}[h!]
\begin{center}
\psfrag{D}[l][B]{\tiny$D$} \psfrag{N}[l][B]{\tiny$N$}
\psfrag{S}[l][B]{\tiny$S$}
\includegraphics[height=0.7in,width=3in]{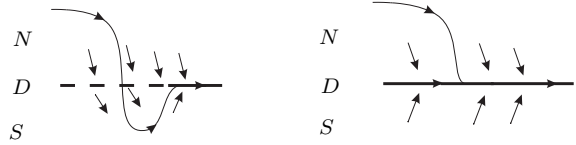}
\end{center}
\caption{Poly-Trajectories} \label{fig:08}
\end{figure}

Now we define saddle connections on $Z$.

\begin{definition} $\frac{}{}$

\begin{itemize} \item[a)] A separatrix of $Z$ is a trajectory of $X$, $Y$ or $F_Z$ such
that its $\alpha$ or $\omega$-limit sets are  saddle points of $X$,
$Y$ or $F_Z$.

\item[b)] A double separatrix of $Z$ is a trajectory of $X$, $Y$
or $F_Z$ such that their $\alpha$ and $\omega$-limit sets are
saddles or a separatrix of $X$ (resp. $Y$) that meets $D$ at a
saddle of $F_Z$.

\item[c)] A saddle connection of $Z$ is a double separatrix or a
poly-trajectory that contains a double separatrix or two
separatrices (see Figure~\ref{fig:07}).
\end{itemize}
\end{definition}

\begin{figure}[h!]
\begin{center}
\psfrag{D}[l][B]{\tiny$D$} \psfrag{N}[l][B]{\tiny$N$}
\psfrag{S}[l][B]{\tiny$S$}
\includegraphics[height=1in,width=3in]{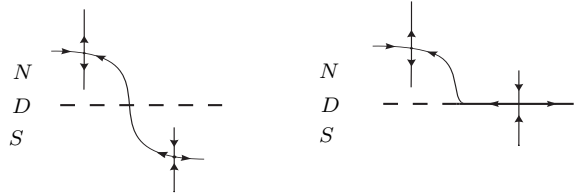}
\end{center}
\caption{Saddle Connections} \label{fig:07}
\end{figure}

Now we define closed trajectories of $Z$ that have points or arcs
of $D$.

\begin{definition}\label{2.5}
Let $\gamma$ be a closed poly-trajectory of $Z = (X,Y)$.
\begin{enumerate}
\item $\gamma$ is called a {\it closed poly-trajectory of type 1}
if $\gamma$ meets $D$ only at sewing points; \item $\gamma$ is
called a {\it closed poly-trajectory of type 3} if it has at least
one fold point and one sliding or escaping arc of $Z$ (see
Figure~\ref{fig:06}).
\end{enumerate}
\end{definition}

Next definition extends the notion of hyperbolic orbits for closed
poly-trajectory of $Z$.

\begin{definition}\label{2.6}
Let $\gamma$ be a closed poly-trajectory of $Z = (X,Y)\in
\Omega_m$. It is called {\it elementary} if one of the cases below
holds:
\begin{enumerate}
\item $\gamma$ is of type 1 and has a first return map $\eta$ with
$\eta^{\prime} \neq 1$;  \item $\gamma$ is of type 3 and all arcs
of $F_{Z}$ are sliding or all are escaping.
\end{enumerate}
\end{definition}

The definitions above can be reformulated in similar way for the
discontinuous piecewise analytic vector field $\mathcal P(Z)$ in
$S^2$.

Now, we will study the stability of $S^1$ when it is a closed
poly-trajectory of $\mathcal P(Z)$. Note that in this case $S^1$
is necessarily of type $1$. Moreover $m$ is odd, otherwise always
there are singular points of $\mathcal P(Z)$ in $S^1$. We will
need the following result that can be found in \cite{AGLM, S}.

\begin{proposition}
\label{pro:01} Let $X$ be a $C^1$ planar vector field. Given a point
$p_0\in \mathbb R^2$, denote by $\phi (t,p_0)$ the orbit of $X$ such
that $\phi (0,p_0)=p_0$ and by $p_1$ the point $\phi(T_0,p_0)$. Let
$\Sigma_0$ and $\Sigma_1$ be transversal sections of $X$ at the
points $p_0$ and $p_1$, respectively. If $\sigma:I\rightarrow
\mathbb R^2$ and $\hat{\sigma}:\hat{I}\rightarrow \mathbb R^2$ are
the respective parameterizations of $\Sigma_0$ and $\Sigma_1$ with
$\sigma(s_0)=p_0$ and $\hat{\sigma}(\hat{s}_0)=p_1$, then the
derivative of the transition map $\Pi:\Sigma_0\rightarrow \Sigma_1$
at the point $p_0$, defined by the flow of $X$, is given by
\[
\displaystyle
\Pi'(p_0)=\frac{\det\left(\displaystyle\begin{array}{c}X(p_0)\\\sigma'(s_0)\end{array}\right)}
{\det\left(\displaystyle\begin{array}{c} X(p_1)\\
\hat{\sigma}'(\hat{s}_0)\end{array}\right)}
e^{\displaystyle\int_0^{T_0}\mbox{\rm div} X(\phi(t,p_0))dt}.
\]
\end{proposition}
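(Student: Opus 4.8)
The plan is to derive the formula by computing the derivative of the transition map as a composition of three natural maps and tracking how each one distorts arc length along the transversal sections, together with the contraction/expansion of the flow transverse to orbits. First I would set up local coordinates: near $p_0$ choose coordinates so that the transversal $\Sigma_0$ is a coordinate axis, and similarly near $p_1$ for $\Sigma_1$; the transition map $\Pi$ is then obtained by flowing from $\Sigma_0$ to $\Sigma_1$ for a time close to $T_0$. The classical fact I would invoke is the variational (first-order) equation: if $\phi(t,p)$ denotes the flow, then $W(t)=D_p\phi(t,p_0)$ solves the linear ODE $\dot W = DX(\phi(t,p_0))\,W$ with $W(0)=\mathrm{Id}$, and by Liouville's formula $\det W(t) = \exp\!\left(\int_0^t \operatorname{div} X(\phi(s,p_0))\,ds\right)$. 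This exponential factor is exactly the one appearing in the statement, so the remaining work is purely the bookkeeping that converts the $2\times 2$ Jacobian $W(T_0)$ into the scalar derivative $\Pi'(p_0)$ between the two one-dimensional sections.

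Next I would carry out that bookkeeping. Write $\Pi = \pi_1 \circ \Phi \circ \sigma$, where $\sigma\colon I\to\Sigma_0$ parametrizes the source section, $\Phi$ is the time-$\tau(s)$ map of the flow (with $\tau$ the first-hitting time of $\Sigma_1$, $\tau(s_0)=T_0$), and $\pi_1=\hat\sigma^{-1}$ reads off the parameter along $\Sigma_1$. Differentiating at $s_0$: the image vector $\frac{d}{ds}\big|_{s_0}\Phi(\sigma(s))$ equals $W(T_0)\,\sigma'(s_0) + \tau'(s_0)\,X(p_1)$, since varying $s$ moves both the initial point and the elapsed time. Then $\Pi'(p_0)$ is the component of this vector along $\hat\sigma'(\hat s_0)$, i.e. we project out the $X(p_1)$ direction. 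A clean way to extract that component is via determinants: for any vector $v$, its $\hat\sigma'(\hat s_0)$-component in the decomposition $\mathbb R^2 = \mathbb R\,X(p_1)\oplus \mathbb R\,\hat\sigma'(\hat s_0)$ is $\det\!\big(\begin{smallmatrix}X(p_1)\\ v\end{smallmatrix}\big)\big/\det\!\big(\begin{smallmatrix}X(p_1)\\ \hat\sigma'(\hat s_0)\end{smallmatrix}\big)$. Applying this to $v = W(T_0)\sigma'(s_0)+\tau'(s_0)X(p_1)$, the $\tau'$ term dies (its row is proportional to $X(p_1)$), leaving
\[
\Pi'(p_0) = \frac{\det\left(\begin{array}{c}X(p_1)\\ W(T_0)\,\sigma'(s_0)\end{array}\right)}{\det\left(\begin{array}{c}X(p_1)\\ \hat\sigma'(\hat s_0)\end{array}\right)}.
\]

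Finally I would simplify the numerator. Since $X(p_1) = X(\phi(T_0,p_0))$ is itself a solution of the variational equation along the orbit — indeed $\frac{d}{dt}X(\phi(t,p_0)) = DX(\phi(t,p_0))\,X(\phi(t,p_0))$, so $X(p_1) = W(T_0)\,X(p_0)$ — the numerator is $\det\big(W(T_0)X(p_0),\,W(T_0)\sigma'(s_0)\big)$ (writing the two rows as the two arguments), which by multiplicativity of the determinant equals $\det W(T_0)\cdot \det\big(X(p_0),\sigma'(s_0)\big)$. Substituting $\det W(T_0) = \exp\!\left(\int_0^{T_0}\operatorname{div}X(\phi(t,p_0))\,dt\right)$ gives precisely the claimed formula. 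The only step needing genuine care — the main obstacle — is justifying that the hitting time $\tau(s)$ is a well-defined $C^1$ function near $s_0$ with $\tau(s_0)=T_0$; this follows from the implicit function theorem applied to the defining equation $g(\phi(\tau, \sigma(s)))=0$ of $\Sigma_1$, using transversality $\langle \nabla g(p_1), X(p_1)\rangle \neq 0$, and one should note that the particular value of $\tau'(s_0)$ never enters the final answer, which is why the computation collapses so cleanly.
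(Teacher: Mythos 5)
Your proposal is correct. Note that the paper does not actually prove this proposition: it quotes it as a known result from the cited references (Andronov--Gordon--Leontovich--Maier and Sotomayor's book), so there is no internal proof to compare against. Your derivation is the standard one found in those sources: differentiate $s\mapsto\phi(\tau(s),\sigma(s))$ to get $W(T_0)\sigma'(s_0)+\tau'(s_0)X(p_1)$ with $W$ the solution of the variational equation, extract the $\hat\sigma'(\hat s_0)$-component by the determinant trick (which kills the hitting-time term), use $X(p_1)=W(T_0)X(p_0)$ and Liouville's formula $\det W(T_0)=\exp\int_0^{T_0}\operatorname{div}X(\phi(t,p_0))\,dt$; the implicit-function-theorem justification of the $C^1$ hitting time $\tau$ is exactly the point that needed to be said, and you said it. The argument is complete and matches the stated formula, including the orientation/sign conventions of the row determinants.
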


Denote by $\tilde{Z}=(\tilde{X},\tilde{Y})$ the discontinuous
piecewise polynomial  vector field which gives rise to system
\eqref{eq:03}. In the plane $(\theta, \rho)$ the points $p_0=(0,0)$,
$p_2=(2\pi,0)$, correspond to the point $(1,0,0)$ of $S^1\cap D$,
and $p_1=(\pi,0)$ corresponds to the other point $(-1,0,0)$. As
$S^1$ is a closed poly-trajectory of type $1$, we can take the
following transversal sections $\Sigma_0=\{(0,\rho): 0\leq
\rho\leq\delta_0\}$, $\Sigma_1=\{(\pi,\rho): 0\leq
\rho\leq\delta_0\}$ and $\Sigma_2=\{(2\pi,\rho): 0\leq
\rho\leq\delta_0\}$ of \eqref{eq:03} with $\delta_0$ small enough.
Hence, we define the following transition maps
$\Pi_1:\Sigma_0\rightarrow \Sigma_1$, $\Pi_2:\Sigma_1\rightarrow
\Sigma_2$ and obtain the Poincar\'e map $\Pi$ of $\mathcal P(Z)$
associated to $S^1$ in the coordinates $(\theta,\rho)$, given by
$\Pi=\Pi_2\circ\Pi_1$. We have that $
\Pi'(p_0)=\Pi_2'(\Pi_1(p_0))\Pi_1'(p_0)=\Pi_2'(p_1)\Pi_1'(p_0)$.
Thus, by Proposition \ref{pro:01} and expression \eqref{eq:03}, it
follows that
\[
\begin{array}{lcl}
\Pi_1'(p_0) & =& \displaystyle
-\frac{Q_{1,m}(1,0)}{Q_{1,m}(-1,0)}e^{\displaystyle\int^{T_1}_0\mbox{div}\tilde{X}(\theta(t),0)dt}
\\ & = &
e^{\displaystyle\int^{T_1}_0\left(-R_{1,m}(\theta(t))+\frac{dA_{1,m}}{d\theta}(\theta(t))\right)dt},
\end{array}
\]
with $\dot{\theta}(t)=A_{1,m}(\theta(t))$, $\theta (0)=0$ and
$\theta(T_1)=\pi$. Therefore,
\[
\Pi_1'(p_0)  = \displaystyle
\frac{A_{1,m}(\pi)}{A_{1,m}(0)}e^{\displaystyle-\int^{\pi}_0\frac{R_{1,m}(\theta)}{A_{1,m}(\theta)}
d\theta}  =
e^{\displaystyle-\int^{\pi}_0\frac{R_{1,m}(\theta)}{A_{1,m}(\theta)}d\theta}.
\]
Analogously, we have
\[
\Pi_2'(p_1)
=e^{\displaystyle-\int^{2\pi}_{\pi}\frac{R_{2,m}(\theta)}{A_{2,m}(\theta)}
d\theta}  =
e^{\displaystyle-\int^{\pi}_0\frac{R_{2,m}(\theta)}{A_{2,m}(\theta)}d\theta}.
\]
Hence,
\[
\Pi'(p_0)=e^{\displaystyle-\int^{\pi}_0\left(\frac{R_{1,m}(\theta)}{A_{1,m}(\theta)}+\frac{R_{2,m}(\theta)}{A_{2,m}(\theta)}\right)d\theta}.
\]
Note that we have performed the computations above supposing that
$S^1$ is oriented in the counterclockwise sense.

Now we can state the following proposition.
\begin{proposition}
Suppose that $\mathcal P(Z)$, $Z\in\Omega_m$, with $m$ odd, does not
have singular points in $S^1$. Then $S^1$ is a closed
poly-trajectory of type $1$ and the derivative of the Poincar\'e map
associated to a transversal section at the point $p_0\in S^1\cap D$
is given by
\[
\Pi'(p_0)=e^{\sigma\mu}=e^{\displaystyle\sigma\int^{\pi}_0\left(\frac{R_{1,m}(\theta)}{A_{1,m}(\theta)}+\frac{R_{2,m}(\theta)}{A_{2,m}(\theta)}
\right)d\theta},
\]
where $\sigma=-1$, if $S^1$ is oriented in the counterclockwise
sense, and $\sigma=1$, otherwise. Moreover, $S^1$ is an attractor if
$\sigma\mu<0$ and a repeller if $\sigma\mu>0$.
\end{proposition}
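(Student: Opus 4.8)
The plan is to establish the statement in three steps: that $S^{1}$ is a closed poly-trajectory of type $1$; the computation of $\Pi'(p_{0})$; and the stability dichotomy.

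First I would verify that $S^{1}$ is a closed poly-trajectory of type $1$. Since $S^{1}$ is invariant under $\mathcal P(X)$ and $\mathcal P(Y)$ and meets $D$ transversally exactly at the two points $S^{1}\cap D=\{(\pm1,0,0)\}$, at each of these points $\mathcal P(X)$ and $\mathcal P(Y)$ are tangent to $S^{1}$, hence parallel to one another, transversal to $D$, and (by hypothesis) nonzero. As observed just before \eqref{eq:04}, such a point of $S^{1}\cap D$ is then either on the sewing arc $SW$ or a singular point of $F_{\mathcal P(Z)}$; the latter is ruled out because $\mathcal P(Z)$ has no singular points in $S^{1}$, so $(\pm1,0,0)\in SW$. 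Away from $D$, $S^{1}$ is a regular trajectory arc of $\mathcal P(X)$ on $N$ and of $\mathcal P(Y)$ on $S$, so by Definition \ref{2.5} it is a closed poly-trajectory of type $1$; moreover near $S^{1}$ the trajectories cross $D$ transversally at the two sewing points, so the piecewise flow, and hence the first-return map along a transversal section through $p_{0}$ in $S^{2}_{+}\cup S^{1}$, is well defined.

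Next I would compute $\Pi'(p_{0})$, along the lines of the computation carried out just before the proposition. By the change of variables introduced before \eqref{eq:03} it suffices to work with the trigonometric field $\tilde Z=(\tilde X,\tilde Y)$ of \eqref{eq:03} in the $(\theta,\rho)$-plane, where $S^{1}=\{\rho=0\}$ and $S^{2}_{+}=\{\rho>0\}$. Assuming first $S^{1}$ counterclockwise, take the sections $\Sigma_{0},\Sigma_{1},\Sigma_{2}$ at $p_{0}=(0,0)$, $p_{1}=(\pi,0)$, $p_{2}=(2\pi,0)$ parametrized by $\rho$; these are transversal to $\tilde Z$ for $\delta_{0}$ small since $A_{1,m}$ (resp. $A_{2,m}$) has no zero on $[0,\pi]$ (resp. $[\pi,2\pi]$) by the first step. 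With $\Pi=\Pi_{2}\circ\Pi_{1}$, $\Pi_{1}\colon\Sigma_{0}\to\Sigma_{1}$ along $\tilde X$ and $\Pi_{2}\colon\Sigma_{1}\to\Sigma_{2}$ along $\tilde Y$, apply Proposition \ref{pro:01} to each leg: on $\{\rho=0\}$ one has $\tilde X=(A_{1,m}(\theta),0)$ and $\mbox{div}\,\tilde X(\theta,0)=\frac{dA_{1,m}}{d\theta}(\theta)-R_{1,m}(\theta)$, the determinant quotient of Proposition \ref{pro:01} equals $A_{1,m}(0)/A_{1,m}(\pi)$, and the substitution $dt=d\theta/A_{1,m}(\theta)$ coming from $\dot\theta=A_{1,m}(\theta)$ on $\{\rho=0\}$ turns $\int_{0}^{T_{1}}\frac{dA_{1,m}}{d\theta}\,dt$ into $\ln|A_{1,m}(\pi)/A_{1,m}(0)|$. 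Since $A_{1,m}$ has constant sign on $[0,\pi]$, these two boundary contributions cancel, giving $\Pi_{1}'(p_{0})=\exp(-\int_{0}^{\pi}R_{1,m}(\theta)/A_{1,m}(\theta)\,d\theta)$, and similarly $\Pi_{2}'(p_{1})=\exp(-\int_{\pi}^{2\pi}R_{2,m}(\theta)/A_{2,m}(\theta)\,d\theta)$. Because $m$ is odd, $A_{i,m}$ and $R_{i,m}$ are $\pi$-periodic, so $\int_{\pi}^{2\pi}=\int_{0}^{\pi}$, and by the chain rule $\Pi'(p_{0})=\Pi_{2}'(p_{1})\Pi_{1}'(p_{0})=e^{-\mu}$. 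For $S^{1}$ clockwise, the same computation run in reverse time (equivalently, with $\Pi$ replaced by $\Pi^{-1}$) gives $\Pi'(p_{0})=e^{\mu}$; the two cases are unified as $\Pi'(p_{0})=e^{\sigma\mu}$.

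Finally, since $\Pi$ is a $C^{1}$ diffeomorphism of $\{0\le\rho\le\delta_{0}\}$ fixing $\rho=0$ with $\Pi'(0)=e^{\sigma\mu}$, the closed poly-trajectory $S^{1}$ is hyperbolic precisely when $\sigma\mu\ne0$: if $\sigma\mu<0$ then $\Pi'(0)<1$, so $\Pi^{n}(\rho)\to0$ and orbits of $\mathcal P(Z)$ in $S^{2}_{+}$ near $S^{1}$ spiral toward it, so $S^{1}$ is an attractor; if $\sigma\mu>0$ then $\Pi'(0)>1$ and $S^{1}$ is a repeller. The main obstacle is the middle step: getting the parametrizations and the determinant factors in Proposition \ref{pro:01} exactly right, and—above all—exploiting that $m$ odd makes $A_{i,m}$ and $R_{i,m}$ $\pi$-periodic, since this is precisely what forces the boundary terms to cancel and the integral over $[\pi,2\pi]$ to fold onto $[0,\pi]$, yielding the single invariant $\mu$ and the sign $\sigma$.
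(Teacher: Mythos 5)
Your proposal is correct and follows essentially the same route as the paper: pass to the $(\theta,\rho)$ chart of \eqref{eq:03}, note that absence of singular points forces $(\pm1,0,0)$ to be sewing points so $S^1$ is a type $1$ closed poly-trajectory, and compute $\Pi'=\Pi_2'\,\Pi_1'$ via Proposition \ref{pro:01}, with the determinant quotient cancelling the boundary term from $\int \frac{dA_{1,m}}{d\theta}\,dt$ and the $m$-odd ($\pi$-periodicity) argument folding $\int_\pi^{2\pi}$ onto $\int_0^\pi$. Your added details (transversality of the sections, the clockwise case via the inverse map, and the attractor/repeller dichotomy from $\Pi'(0)\lessgtr 1$) are consistent with what the paper leaves implicit.
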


We conclude that $S^1$ is an elementary closed poly-trajectory if
and only if
\begin{equation}
\label{eq:05}
\int^{\pi}_0\left(\frac{R_{1,m}(\theta)}{A_{1,m}(\theta)}+\frac{R_{2,m}(\theta)}{A_{2,m}(\theta)}\right)d\theta\neq
0.
\end{equation}

The proof of the proposition below is analogous to the proof of
Proposition $13$, page 234, established in \cite{MS}.

\begin{proposition}\label{prop1.7} Let $\gamma$ be an elementary closed poly-trajectory of
$\mathcal P(Z)$ with $Z  =  (X,Y)\in \Omega_m$.  Then, given a
transition function $\varphi$, there is a neighborhood $V$ of
$\gamma$ and $\epsilon_{0} > 0$  such  that  for   $ 0 < \epsilon
\leq \epsilon_{0}$,   $\mathcal P(Z)_{\epsilon}$ has only one
periodic orbit in $V$, and this orbit is hyperbolic (see
Figure~\ref{fig:06}).
\end{proposition}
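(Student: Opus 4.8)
The plan is to treat separately the two alternatives of Definition \ref{2.6}, realizing in each case the periodic orbits of $\mathcal P(Z)_{\epsilon}$ near $\gamma$ as fixed points of a first return map on a fixed transversal section, controlled uniformly in $\epsilon$; the argument parallels that of Proposition $13$ of \cite{MS}. Note throughout that $\gamma$ is a finite concatenation of regular arcs of $\mathcal P(X)$, $\mathcal P(Y)$ and/or $F_{\mathcal P(Z)}$, and that return maps of local flows have strictly positive derivative, so a fixed point of such a map is hyperbolic precisely when the derivative differs from $1$.

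First suppose $\gamma$ is of type $1$: it meets $D$ only at finitely many sewing points, and its first return map $\eta$ on a transversal section $\Sigma$ at a regular point of an arc of $\mathcal P(X)$ or $\mathcal P(Y)$ has a fixed point $\bar p$ with $\eta'(\bar p)\neq1$. By Proposition \ref{prop1.1}, near each sewing point crossed by $\gamma$ the field $\mathcal P(Z)_{\epsilon}$ has no singular points and its orbits cross $D$ transversally for all small $\epsilon$, while elsewhere along $\gamma$ it coincides with $\mathcal P(X)$ or $\mathcal P(Y)$. Hence the return map $\eta_{\epsilon}$ of $\mathcal P(Z)_{\epsilon}$ is defined on a fixed subinterval of $\Sigma$ for $\epsilon\leq\epsilon_{0}$, and, since each passage through the band $\{|f|<\epsilon\}$ at a sewing point converges, in the $C^1$ topology as $\epsilon\to0$, to the transition obtained by concatenating orbits of $\mathcal P(X)$ and $\mathcal P(Y)$ across $D$ (the regularization estimate near a sewing arc from \cite{ST}), one gets $\eta_{\epsilon}\to\eta$ in $C^1$. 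Since $\eta-\mathrm{Id}$ has a simple zero at $\bar p$, the implicit function theorem gives, for small $\epsilon$, a unique zero $\bar p_{\epsilon}$ of $\eta_{\epsilon}-\mathrm{Id}$ near $\bar p$ with $\eta_{\epsilon}'(\bar p_{\epsilon})\to\eta'(\bar p)\neq1$, so the associated periodic orbit of $\mathcal P(Z)_{\epsilon}$ is hyperbolic. Choosing $V$ to be a thin tubular neighborhood of $\gamma$ (and shrinking $\epsilon_{0}$) forces every periodic orbit of $\mathcal P(Z)_{\epsilon}$ in $V$ to meet $\Sigma$, so it is unique. This also covers the case $\gamma=S^1$, with $\eta'(\bar p)=e^{\sigma\mu}\neq1$ by \eqref{eq:05}.

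Now suppose $\gamma$ is of type $3$ with all arcs of $F_{\mathcal P(Z)}$ sliding (the all-escaping case follows by reversing time). Then $\gamma$ alternates regular arcs of $\mathcal P(X)$ and/or $\mathcal P(Y)$ off $D$, sliding arcs of $F_{\mathcal P(Z)}$, and fold points. With $\Sigma$ transversal to $\gamma$ at a regular point off $D$, the return map $\eta_{\epsilon}$ of $\mathcal P(Z)_{\epsilon}$ is the composition of flow maps of $\mathcal P(X)$ or $\mathcal P(Y)$ along the regular arcs, of transition maps across the bands $\{|f|<\epsilon\}$ along the sliding arcs, and of passage maps near the fold points, which by Proposition \ref{prop1.6} contain no singular points of $\mathcal P(Z)_{\epsilon}$. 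Along a sliding arc $F_{\mathcal P(Z)}$ points towards $D$ from both sides, so after the rescaling $f=\epsilon u$ the field $\mathcal P(Z)_{\epsilon}$ becomes a slow--fast system whose critical manifold is normally hyperbolic attracting and carries a reduced flow conjugate to $F_{\mathcal P(Z)}$ (the content of \cite{ST}); hence the band transition is a strong transverse contraction. The estimate carried out in the proof of Proposition $13$ of \cite{MS} then shows that, combined with the bounded contributions of the regular flow maps and the contributions of the fold passages (handled by the blow-up analysis of the regularized fold), the full return map satisfies $0<\eta_{\epsilon}'<1$ for all sufficiently small $\epsilon$. Thus $\eta_{\epsilon}$ is a contraction with a unique fixed point, which yields the unique periodic orbit of $\mathcal P(Z)_{\epsilon}$ in a thin tubular neighborhood $V$ of $\gamma$; it is hyperbolic, attracting (resp. repelling in the escaping case).

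I expect the main obstacle to be the type-$3$ estimate: one must verify that the transition across $\{|f|<\epsilon\}$ along a sliding arc is well defined with no escape and no singular point, that it converges as $\epsilon\to0$ to the sliding flow of $F_{\mathcal P(Z)}$, and that its transverse derivative, composed with the fold passage maps — where normal hyperbolicity of the critical manifold degenerates and a blow-up analysis is needed — produces a return map whose derivative stays below $1$ uniformly for small $\epsilon$ (and, if the statement is to hold for every transition function, uniformly in $\varphi$). This is precisely the analysis of Proposition $13$ of \cite{MS}, resting on the regularization estimates of \cite{ST}; the remaining work is the routine gluing of these local facts into a single $C^1$ statement for $\eta_{\epsilon}$ and the choice of a $V$ and an $\epsilon_{0}$ valid along the whole poly-trajectory.
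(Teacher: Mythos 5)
Your proposal is correct and follows essentially the same route as the paper, which gives no independent argument but declares the proof analogous to Proposition 13 of \cite{MS}: a first-return-map analysis with $C^1$ convergence and the implicit function theorem for type 1 poly-trajectories, and the strong transverse contraction across the regularized sliding band dominating the bounded fold and regular-arc contributions for type 3. Your reconstruction fills in exactly the details that reference supplies (note only that $\varphi$ is fixed in the statement, so no uniformity in the transition function is required).
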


\begin{figure}[ptb]
\begin{center}
\psfrag{T}[l][B]{\tiny type $3$} \psfrag{T1}[l][B]{\tiny type $1$}
 \psfrag{N}[l][B]{\tiny$N$}
\psfrag{S}[l][B]{\tiny$S$} \psfrag{D}[l][B]{\tiny$D$}
\includegraphics[height=2in,width=2in]{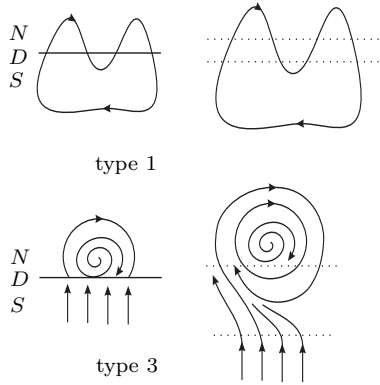}
\end{center}
\caption{Closed poly-trajectories and their
$\varphi_\epsilon$-compactification} \label{fig:06}
\end{figure}

\section{Piecewise Polynomial Vector Fields with Structurally Stable $\varphi_\epsilon$-Compactification}
\label{sec:03}

In this section we define a set   $G_m$ of discontinuous piecewise
polynomial  vector fields whose elements, $Z$, have structurally
stable $\varphi_\epsilon$-compactification $\mathcal
P(Z)_\epsilon$ (Definition \ref{1.1}), for any transition function
$\varphi$ and small $\epsilon$.

The notion of structural stability in $\chi_m$ is defined in
similar way as in $\chi^r(S^2,S^1)$ (see Definition \ref{def:01}).
Denote by  $\Sigma_m$  the set of $X\in \chi_m$ that are
structurally stable.

\begin{definition} \label{4.0}
We call $\mathcal S_m$ the set of all polynomial vector fields $X
\in \chi_m$ for which $\mathcal P(X)$ satisfies the following
conditions:
\begin{itemize}
\item[(1)] all its singular points are hyperbolic; \item[(2)] all
its periodic orbits are hyperbolic; \item[(3)] it does not have
saddle connections in $S^2$ unless they are contained in $S^1$.
\end{itemize}
\end{definition}

We have that $\mathcal S_m\subset \Sigma_m$ and it is an open and
dense set of $\chi_m$. However, it is  an {\it unsolved  problem}
to prove (or disprove) that $\mathcal S_m=\Sigma_m$. See \cite{S1}
for more details.

\begin{remark}\label{rmk:abuso}
By extension of the notation in definition \ref{4.0}, we will
write in what follows $ X|_{N} \in \mathcal S_m$ and $Y|_{S} \in
\mathcal S_m$  to mean that  conditions (1), (2) and (3) in this
definition hold for $ X|_{N}$ and $Y|_{S}$.
\end{remark}

\begin{definition} \label{4.1}
Write $G_m = G_m(1) \cap G_m(2) \cap G_m(3)$, where:
\begin{enumerate}
\item $G_m(1) = \{Z = (X,Y) \in \Omega_m :X|_{N}$ and $Y|_{S} \in
\mathcal S_m$; each $D$-singularity of $\mathcal P(Z)$ is elementary
$\}$. \item $G_m (2) = \{Z = (X,Y) \in \Omega_m:X|_{N}$ and $Y|_{S}
\in \mathcal S_m$; each closed poly-trajectory of $\mathcal P(Z)$ is
elementary $\}$. \item $G_m(3) = \{Z = (X,Y) \in \Omega_m:X|_{N}$
and $Y|_{S} \in \mathcal S_m$; $\mathcal P(Z)$ does not have saddle
connections in $S^2$ unless they are contained in $S^1\}$.
\end{enumerate}
\end{definition}

\begin{proposition}\label{prop2.4}
Let $Z = (X,Y) \in G_m(1)$. Then, given a transition function
$\varphi$, there is an $\epsilon_{0} > 0$ such that for $ 0 <
\epsilon  \leq \epsilon_{0}$, $\mathcal P(Z)_{\epsilon}$ has only
hyperbolic singularities in $S^2$.
\end{proposition}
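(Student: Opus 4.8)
The plan is to split $S^2$ into a region lying away from the discontinuity circle $D=\{f=0\}$, where $\mathcal P(Z)_\epsilon$ coincides identically with $\mathcal P(X)$ or with $\mathcal P(Y)$, and a fixed tubular neighborhood of $D$, where the local results of Section \ref{sec:02} apply. Since $D$ is a compact circle, a finite subcover will produce a single threshold $\epsilon_0$ valid everywhere.

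For the outer region, recall that $Z\in G_m(1)$ forces $X|_N,Y|_S\in\mathcal S_m$ (Remark \ref{rmk:abuso}), so $\mathcal P(X)$ and $\mathcal P(Y)$ have only finitely many singular points on $S^2$ and all of them are hyperbolic. Moreover none of them lies on $D$: if $\mathcal P(X)(p)=0$ for some $p\in D$ then $Xf(p)=0$ and $X^2f(p)=0$, so $p$ is neither a $D$-regular point of $\mathcal P(Z)$, nor a fold point, nor a hyperbolic singular point of $F_{\mathcal P(Z)}$, contradicting the requirement in Definition \ref{4.1} that every $D$-singularity of $\mathcal P(Z)$ be elementary (and similarly for $\mathcal P(Y)$). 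Hence there is $c>0$ such that $f>c$ at every singular point of $\mathcal P(X)$ with $f>0$ and $f<-c$ at every singular point of $\mathcal P(Y)$ with $f<0$. For $0<\epsilon\le c$, on the set $\{f\ge c\}$ one has $\mathcal P(Z)_\epsilon=\mathcal P(X)$, so its singular points there are exactly the hyperbolic singular points of $\mathcal P(X)$ with $f>0$; symmetrically on $\{f\le -c\}$.

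For the neighborhood of $D$, note that under the hypothesis $Z\in G_m(1)$ every point $p\in D$ is either a $D$-regular point of $\mathcal P(Z)$, a fold point of $\mathcal P(Z)$, or a hyperbolic singular point of $F_{\mathcal P(Z)}$. Applying accordingly Proposition \ref{prop1.1}, Proposition \ref{prop1.6}, or Proposition \ref{prop1.5} yields for each $p$ an open neighborhood $V_p$ and some $\epsilon_p>0$ such that, for $0<\epsilon\le\epsilon_p$, the field $\mathcal P(Z)_\epsilon$ has in $V_p$ no singular point in the first two cases and exactly one, a hyperbolic saddle or node, in the third. By compactness, finitely many $V_{p_1},\dots,V_{p_k}$ cover $D$; their union being an open neighborhood of the compact set $D$, there is $\eta>0$ with $\{|f|\le\eta\}\subset\bigcup_j V_{p_j}$. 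Set $\epsilon_0=\min\{c,\eta,\epsilon_{p_1},\dots,\epsilon_{p_k}\}$.

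Finally, fix $0<\epsilon\le\epsilon_0$ and let $q$ be a singular point of $\mathcal P(Z)_\epsilon$. If $|f(q)|\ge\epsilon_0$ then, since $\epsilon_0\le c$ and $\mathcal P(Z)_\epsilon$ equals $\mathcal P(X)$ on $\{f\ge\epsilon_0\}$ and $\mathcal P(Y)$ on $\{f\le-\epsilon_0\}$, the point $q$ is a hyperbolic singular point of one of these fields, hence of $\mathcal P(Z)_\epsilon$. If $|f(q)|<\epsilon_0\le\eta$ then $q\in V_{p_j}$ for some $j$; since $\mathcal P(Z)_\epsilon$ has a singular point in $V_{p_j}$, the $D$-regular and fold cases are excluded, so $p_j$ is a hyperbolic singular point of $F_{\mathcal P(Z)}$ and $q$ is the unique, hyperbolic, singular point of $\mathcal P(Z)_\epsilon$ in $V_{p_j}$. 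In either case $q$ is hyperbolic, which proves the proposition. The one point requiring care is the uniformization: making the two outer regions and the finitely many neighborhoods of $D$ cover $S^2$ for a common $\epsilon_0$; the substantive local analysis is already contained in Section \ref{sec:02}, so what remains is essentially a compactness patching argument.
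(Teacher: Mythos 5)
Your proof is correct and follows essentially the same route as the paper: classify each point of $D$ as $D$-regular, fold, or hyperbolic singularity of $F_{\mathcal P(Z)}$, apply Propositions \ref{prop1.1}, \ref{prop1.6} and \ref{prop1.5}, use compactness of $D$ to extract a finite subcover and a common $\epsilon_0$, and invoke $X|_N, Y|_S \in \mathcal S_m$ away from $D$. The only difference is that you make explicit some details the paper leaves implicit (that no singularities of $\mathcal P(X)$, $\mathcal P(Y)$ lie on $D$, and the buffer-zone argument showing $\mathcal P(Z)_\epsilon$ coincides with $\mathcal P(X)$ or $\mathcal P(Y)$ outside the tubular neighborhood), which is a welcome sharpening rather than a new approach.
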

\begin{proof}
As $X|_{N}$ and $Y|_{S} \in \mathcal S_m$, it remains to prove
that the singularities that appear due to the
$\varphi_\epsilon$-compactification process are hyperbolic.
Indeed, let $p$ be a point of $D$, then $p$ can be a $D$-regular
point, a hyperbolic singularity of $F_{\mathcal P(Z)}$ or a fold.
For each case, there is a proposition that guarantees the
existence a number $\epsilon_{0} > 0$ such that, for each
$\epsilon \in (0, \epsilon_{0}]$, $\mathcal P(Z)_{\epsilon}$ has
no singularities near $p$ (Propositions \ref{prop1.1},
\ref{prop1.6}) or has a unique hyperbolic singularity (Proposition
\ref{prop1.5}). The union of these neighborhoods cover $D$, and,
as $D$ is compact, there is a sub covering made by a finite number
of these neighborhoods. Then, we can chose $\epsilon_{0}$ as the
smallest $\epsilon_{0}$ associated to these neighborhoods.
\end{proof}
\begin{proposition}\label{prop4.8}
Let $Z = (X,Y) \in G_m(2) \cap G_m(3)$. Then, given a transition
function $\varphi$, there is an $\epsilon_{0} > 0$ such that  for
$0 <  \epsilon  \leq \epsilon_{0}$, $\mathcal P(Z)_{\epsilon}$ has
only hyperbolic periodic orbits in $S^2$.
\end{proposition}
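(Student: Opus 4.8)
The plan is to reduce the statement to a compactness argument analogous to the one used for Proposition \ref{prop2.4}, but now working with periodic orbits rather than singularities, and to split the periodic orbits of $\mathcal P(Z)_\epsilon$ into two families according to whether or not they shrink toward $D$ as $\epsilon\to 0$. First I would observe that, since $X|_N$ and $Y|_S$ belong to $\mathcal S_m$, any periodic orbit of $\mathcal P(Z)_\epsilon$ that stays a fixed positive distance away from $D$ (in $S^2_+\cup S^1$) must, for $\epsilon$ small, be a small perturbation of a closed orbit of $\mathcal P(X)$ in $N$, of $\mathcal P(Y)$ in $S$, or of $S^1$ itself; all of these are hyperbolic by hypothesis (conditions (2) and (3) of $\mathcal S_m$ for $X|_N$, $Y|_S$, together with the computation preceding Proposition \ref{prop1.7} for $S^1$), so such orbits remain hyperbolic for $\epsilon$ small by the usual persistence of hyperbolic closed orbits under $C^1$-small perturbations. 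This already handles periodic orbits in the interior of $N$ and $S$ and the orbit $S^1$.

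The remaining periodic orbits are those that meet (or accumulate on) $D$. For these I would argue that, for $\epsilon$ small, any such orbit must lie in an $\epsilon$-neighborhood of a closed poly-trajectory of $\mathcal P(Z)$: indeed, outside a neighborhood of $D$ the vector field $\mathcal P(Z)_\epsilon$ coincides with $\mathcal P(X)$ or $\mathcal P(Y)$, and inside the strip $|f|<\epsilon$ the averaged field tracks the Filippov field $F_{\mathcal P(Z)}$ along sliding/escaping arcs and crosses transversally along sewing arcs (this is exactly the content of the local analysis in Section \ref{sec:02}); hence in the limit such orbits converge to a concatenation of regular arcs of $\mathcal P(X)$, $\mathcal P(Y)$ and $F_{\mathcal P(Z)}$ meeting $D$ as in Definition \ref{2.4}, i.e. a closed poly-trajectory. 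Since $Z\in G_m(2)$, every such closed poly-trajectory is elementary (of type $1$ with $\eta'\neq 1$, or of type $3$ with all $F_Z$-arcs of one kind); Proposition \ref{prop1.7} then provides, for each such $\gamma$, a neighborhood $V_\gamma$ and an $\epsilon_\gamma>0$ such that for $0<\epsilon\le\epsilon_\gamma$ the field $\mathcal P(Z)_\epsilon$ has exactly one periodic orbit in $V_\gamma$ and it is hyperbolic.

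I would then close the argument by compactness: the set of closed poly-trajectories of $\mathcal P(Z)$ that meet $D$ is contained in a compact subset $K\subset S^2_+\cup S^1$, and the neighborhoods $V_\gamma$ above, together with the (open) complement of $K$ where the first family of orbits lives, cover $S^2$; extracting a finite subcover and taking $\epsilon_0$ to be the minimum of the finitely many $\epsilon_\gamma$'s (and of the $\epsilon$ controlling persistence of the interior hyperbolic orbits) yields the claim. The step I expect to be the main obstacle is making rigorous the assertion that every periodic orbit of $\mathcal P(Z)_\epsilon$ meeting the strip $|f|<\epsilon$ is trapped, for $\epsilon$ small, in one of the finitely many $V_\gamma$ — i.e. that no ``new'' recurrent behavior is created near $D$ that is not shadowing a closed poly-trajectory. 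This requires a uniform estimate on the return dynamics near $D$ (using the fold, sliding and sewing normal forms of Propositions \ref{prop1.1}, \ref{prop1.5}, \ref{prop1.6}) and an argument that a would-be periodic orbit not contained in any $V_\gamma$ would, in the limit $\epsilon\to 0$, produce a closed poly-trajectory outside $\bigcup V_\gamma$, contradicting either the construction of the cover or the finiteness of closed poly-trajectories guaranteed (via $G_m(2)\cap G_m(3)$) by the absence of saddle connections off $S^1$. This is precisely the place where the analogous Proposition in \cite{MS} is invoked, and the proof here is formally the same once the local pictures in Section \ref{sec:02} are in hand.
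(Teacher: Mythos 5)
Your overall skeleton agrees with the paper's: periodic orbits of $\mathcal P(Z)_{\epsilon}$ that stay away from $D$ are perturbations of the hyperbolic closed orbits of $X|_{N}$, $Y|_{S}$ (or of $S^1$) and persist, while the new orbits created by the regularization near an elementary closed poly-trajectory are supplied by Proposition \ref{prop1.7}, and a single $\epsilon_{0}$ works because there are only finitely many such poly-trajectories. The genuine gap is exactly the step you flag, and the way you propose to close it does not work as stated: it is not true a priori that a periodic orbit of $\mathcal P(Z)_{\epsilon}$ meeting the strip $|f|<\epsilon$ must converge, as $\epsilon\to 0$, to a closed poly-trajectory in the sense of Definition \ref{2.4}. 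Its limit set could just as well be a singular point (an orbit shrinking to a point, i.e.\ a Hopf-type scenario) or a graph formed by saddle points and separatrices passing through $D$; neither of these is a closed poly-trajectory, so the contradiction you invoke (a limit closed poly-trajectory outside $\bigcup V_\gamma$, against finiteness) simply never arises in those cases.

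The paper closes precisely this gap, not by a shadowing or uniform return-map estimate, but by excluding the other two possible sources of new periodic orbits: since all singularities of $X|_{N}$ and $Y|_{S}$ are hyperbolic (they lie in $\mathcal S_m$), no periodic orbit can emerge from a singular point under the $\varphi_\epsilon$-compactification (no Hopf-type bifurcation); and since $Z\in G_m(3)$, $\mathcal P(Z)$ has no separatrix graph in $S^2$ outside $S^1$, so no periodic orbit can be born from such a graph. With these two exclusions, the only new periodic orbits are those coming from closed poly-trajectories, which are hyperbolic by Proposition \ref{prop1.7}. In your write-up the Hopf possibility is never addressed at all, and $G_m(3)$ is used only to argue finiteness of closed poly-trajectories, which is not the role it plays here: it is needed to rule out periodic orbits of $\mathcal P(Z)_{\epsilon}$ accumulating on a saddle-separatrix graph through $D$. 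Supplying these two exclusions turns your outline into essentially the paper's proof.
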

\begin{proof}
As $X|_{N}$ and $Y|_{S} \in \mathcal S_m$, all their periodic
orbits are hyperbolic, so it remains to prove that the same occurs
to the periodic orbits that appear by the
$\varphi_\epsilon$-compactification process. Let $\gamma$ be an
elementary closed poly-trajectory of $\mathcal P(Z)$. Then, by
Proposition \ref{prop1.7}, there is an $\epsilon_{0}
> 0$  such  that  for every  $ 0 < \epsilon \leq \epsilon_{0}$,
$\mathcal P(Z)_{\epsilon}$ has a hyperbolic closed orbit near
$\gamma$. We can choose a unique positive $\epsilon_{0}$ since the
elementary poly-trajectories, are finite
 in
number. As the singularities of $X$ and $Y$ are hyperbolic, there is
no possibility of Hopf type bifurcation. So, the case of periodic
orbits emerging from singularities by the
$\varphi_\epsilon$-compactification process is excluded. As $Z \in
G_m(3)$, $\mathcal P(Z)$ does not have separatrix graphs in $S^2$
unless they are contained in $S^1$, so there is no possibility of
appearance of a periodic orbit from such a graph. So, the periodic
orbits emerging from the $\varphi_\epsilon$-compactification of
closed poly-trajectories are the only new periodic orbits of
$\mathcal P(Z)_{\epsilon}$, for $\epsilon$ small.
\end{proof}

\begin{proposition}\label{prop4.10}
Let $Z = (X,Y) \in G_m(3) \cap G_m(2)$. Then, given a transition
function $\varphi$, there is $\epsilon_{0} > 0$ such that  for  $ 0
<  \epsilon  \leq \epsilon_{0}$, $\mathcal P(Z)_{\epsilon}$ does not
have saddle connections in $S^2$ unless they are contained in $S^1$.
\end{proposition}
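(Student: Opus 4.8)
The plan is to argue by contradiction, exploiting the fact that for $\epsilon$ small the hyperbolic saddles of $\mathcal P(Z)_\epsilon$ and their separatrices are $C^0$-close to the saddles of $X$, $Y$ and $F_{\mathcal P(Z)}$ and to their separatrices, so that a saddle connection of $\mathcal P(Z)_\epsilon$ off $S^1$ would, in the limit $\epsilon\to 0$, produce a saddle connection of $\mathcal P(Z)$ off $S^1$, contradicting $Z\in G_m(3)$. So suppose the conclusion fails for some transition function $\varphi$; then there is a sequence $\epsilon_n\downarrow 0$ and, for each $n$, an orbit $\Gamma_n$ of $\mathcal P(Z)_{\epsilon_n}$ that is a saddle connection not contained in $S^1$.

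First I would locate the saddles. Since $X|_N,Y|_S\in\mathcal S_m$, on $\{f\ge\epsilon_n\}$ one has $\mathcal P(Z)_{\epsilon_n}=\mathcal P(X)$ and on $\{f\le-\epsilon_n\}$ one has $\mathcal P(Z)_{\epsilon_n}=\mathcal P(Y)$, so the hyperbolic saddles of $\mathcal P(Z)_{\epsilon_n}$ lying off the band $\{|f|<\epsilon_n\}$ are exactly the (finitely many) saddles of $\mathcal P(X)$ in $N$ and of $\mathcal P(Y)$ in $S$, while by Propositions \ref{prop1.1}, \ref{prop1.5} and \ref{prop1.6} the saddles inside the band sit near the (finitely many) hyperbolic singular points of $F_{\mathcal P(Z)}$ on $D$; in particular each converges to a saddle of $X$, $Y$ or $F_{\mathcal P(Z)}$. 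Passing to a subsequence I may assume that the two endpoints of $\Gamma_n$ converge to fixed saddles $p$ and $q$ of $Z$, in the sense of the definition of separatrix and saddle preceding Definition \ref{2.5}.

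The core step is the convergence of separatrices. Writing $\Gamma_n=\Gamma^u_n=\Gamma^s_n$ (the unstable separatrix of the saddle near $p$, equal to the stable separatrix of the saddle near $q$), I would show that on any bounded poly-trajectory time interval $\Gamma^u_n$ converges uniformly to a poly-trajectory $\gamma$ of $Z$ issuing from $p$: on $\{|f|\ge\epsilon_n\}$ this is continuous dependence for $\mathcal P(X)$ and $\mathcal P(Y)$; at a sewing point of $D$ the orbit crosses transversally, so convergence persists; at a fold point the local normal form behind Proposition \ref{prop1.6} governs the passage; and along sliding or escaping arcs the regularized orbits are trapped in the band and track the Filippov field $F_{\mathcal P(Z)}$, which is the slow--fast input from \cite{ST,MS} and Section \ref{sec:02}. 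A symmetric argument treats $\Gamma^s_n$ backwards, so the $\alpha$-limit of $\gamma$ is the saddle $p$. It then remains to see that the $\omega$-limit of $\gamma$ is the saddle $q$ rather than a closed poly-trajectory or a separatrix graph of $\mathcal P(Z)$: here $Z\in G_m(2)$ enters, since by Proposition \ref{prop4.8} and Proposition \ref{prop1.7} any elementary closed poly-trajectory is approximated by hyperbolic, isolated periodic orbits of $\mathcal P(Z)_{\epsilon_n}$, so a separatrix approaching such a poly-trajectory would be captured (its $\omega$-limit would be that periodic orbit, not the saddle $q_n$), a contradiction; and $Z\in G_m(3)$ excludes separatrix graphs of $\mathcal P(Z)$ off $S^1$. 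Hence $\gamma$ is a double separatrix, or a poly-trajectory containing a double separatrix or two separatrices, i.e. a saddle connection of $\mathcal P(Z)$ in $S^2$ not contained in $S^1$, contradicting $Z\in G_m(3)$ and finishing the proof. (Connections and limit sets lying inside $S^1$ are allowed, so throughout one works with the part of $\Gamma_n$ that genuinely leaves $S^1$.)

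I expect the main obstacle to be precisely this convergence of separatrices through the discontinuity set --- the uniform control of the regularized orbits across sliding/escaping arcs and fold points, where $\mathcal P(Z)_\epsilon$ has a genuine slow--fast behaviour --- together with ruling out that the limiting poly-trajectory accumulates on a closed poly-trajectory or a graphic instead of reaching the limiting saddle; both are handled by importing the local analysis and the elementarity/genericity machinery already developed in \cite{ST,MS} and in Section \ref{sec:02}.
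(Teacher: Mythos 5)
Your proposal is correct in substance but is organized along a genuinely different route than the paper's. The paper argues directly: since $\mathcal P(Z)$ has no connections off $S^1$, a new connection for $\mathcal P(Z)_{\epsilon}$ could only arise either through points of $D$ or through the breaking of a semi-stable periodic orbit; the second possibility is discarded because $Z \in G_m(2)$, and the first is handled by taking $\delta>0$ to be the minimal distance between the finitely many separatrices of $\mathcal P(Z)$ and then shrinking $\epsilon_0$ so that the separatrices of the regularized field stay $\delta/2$-separated. You instead argue by contradiction along a sequence $\epsilon_n\downarrow 0$, pass to the limit of the putative connections $\Gamma_n$, and use the limit-set trichotomy together with $G_m(2)$ (capture of the separatrix by the hyperbolic periodic orbit approximating an elementary closed poly-trajectory, via Proposition \ref{prop1.7}) and $G_m(3)$ (no graphics off $S^1$) to conclude that the limit is a saddle connection of $\mathcal P(Z)$ not contained in $S^1$, contradicting $G_m(3)$. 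Both routes rest on the same ingredient, which neither you nor the paper proves but both import from \cite{ST,MS} and Section \ref{sec:02}: the convergence of the separatrices of $\mathcal P(Z)_{\epsilon}$ to the corresponding poly-trajectory separatrices of $\mathcal P(Z)$, uniformly across sewing points, folds and sliding or escaping arcs. What your version buys is that it avoids the paper's global separation constant $\delta$ (which is delicate as stated, since two distinct separatrices of $\mathcal P(Z)$ may accumulate on the same limit cycle, so their distance as sets need not be bounded below); what it costs is the extra limit analysis, and there you should also note explicitly that $\omega(\gamma)$ could a priori be an attracting singular point of $X$, $Y$ or a node of $F_{\mathcal P(Z)}$, a case excluded by the same capture argument you already use for elementary closed poly-trajectories, using Propositions \ref{prop1.5} and \ref{prop2.4}.
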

\begin{proof}
We claim that there is an $\epsilon_{0} > 0$  such  that  for every
$ 0 <  \epsilon  \leq \epsilon_{0}$, $\mathcal P(Z)_{\epsilon}$ does
not have  saddle connections, except on $S^1$. Indeed, as $X|_{N}$
and $Y|_{S} \in \mathcal S_m$, and $\mathcal P(Z)$ does not have
separatrix connections on $S^2$ unless they are contained in $S^1$,
the only possibilities for $\mathcal P(Z)_{\epsilon}$ to have such
separatrix connection on $S^2$, unless they are contained in $S^1$,
are as follows:
\begin{enumerate}
\item passing through points of the curve D;
\item due to the
presence of  a semi-stable periodic orbit, which could disappear
and allow a connection of two separatrices.
\end{enumerate}
Possibility 2 is discarded, since $Z \in G_m(2)$. We must analyze
possibility 1. Let $\delta$ be the minimum of the set $\{
dist(e_{i}, e_{j}): e_{i}$ is a separatrix of $\mathcal P(Z)$, and
$i \neq j\}$. Of course, $\delta > 0$, since the number of
separatrices is finite. Then, we diminish $\epsilon_{0}$ so that
the minimum distance of the separatrices for the regularized
vector field can never be less than $\frac{\delta}{2}$.
\end{proof}

Recall that  $\Sigma^r(S^2,S^1)$, $r\geq 1$, stands for structurally
stable vector fields on $S^2$ inside $\chi^r(S^2,S^1)$ (see
Definition \ref{def:02}).

\begin{theorem}\label{t4.1}
If $Z = (X,Y) \in G_m$, then, given a transition function
$\varphi$, there is $\epsilon_{0} > 0$ such that  for $0 <
\epsilon  \leq \epsilon_{0}$, then $\mathcal P(Z)_{\epsilon} \in
\Sigma^r(S^2,S^1)$, $r\geq 1$.
\end{theorem}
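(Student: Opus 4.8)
The plan is to assemble Theorem \ref{t4.1} directly from the three propositions just proved, using the characterization of structural stability furnished by $\Sigma^r(S^2,S^1)$. Recall that an element of $\Sigma^r(S^2,S^1)$ is, by Definition \ref{def:02}, a vector field in $\chi^r(S^2,S^1)$ all of whose singularities are hyperbolic, all of whose periodic orbits are hyperbolic, and which has no saddle connections in $S^2$ except possibly inside $S^1$; and by the remark following that definition such vector fields are structurally stable. So it suffices to verify, for $Z = (X,Y)\in G_m$ and $\epsilon$ small, that $\mathcal P(Z)_\epsilon$ lies in $\chi^r(S^2,S^1)$ and satisfies these three conditions.

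First I would note that $\mathcal P(Z)_\epsilon$ is a $C^\infty$ vector field on $S^2$ (it is defined in Definition \ref{1.1} as a $\varphi_\epsilon$-average of the analytic fields $\mathcal P(X)$ and $\mathcal P(Y)$, with $\varphi$ a $C^\infty$ transition function), hence it belongs to $\chi^r(S^2,S^1)$ for every $r\geq 1$ once one checks that $S^1$ is invariant; but $S^1$ is invariant by both $\mathcal P(X)$ and $\mathcal P(Y)$ and lies outside the set where $\varphi_\epsilon(f)$ is nonconstant near a generic point, and more to the point, on $S^1$ the field $\mathcal P(Z)_\epsilon$ is a convex combination of $\mathcal P(X)$ and $\mathcal P(Y)$, both tangent to $S^1$, so $\mathcal P(Z)_\epsilon$ is tangent to $S^1$ as well. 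Then I would apply the three propositions in turn. Since $Z\in G_m = G_m(1)\cap G_m(2)\cap G_m(3)$, in particular $Z\in G_m(1)$, so by Proposition \ref{prop2.4} there is $\epsilon_1>0$ such that for $0<\epsilon\leq\epsilon_1$ all singularities of $\mathcal P(Z)_\epsilon$ in $S^2$ are hyperbolic. Likewise $Z\in G_m(2)\cap G_m(3)$, so by Proposition \ref{prop4.8} there is $\epsilon_2>0$ such that for $0<\epsilon\leq\epsilon_2$ all periodic orbits of $\mathcal P(Z)_\epsilon$ in $S^2$ are hyperbolic; and by Proposition \ref{prop4.10} there is $\epsilon_3>0$ such that for $0<\epsilon\leq\epsilon_3$, $\mathcal P(Z)_\epsilon$ has no saddle connections in $S^2$ except those contained in $S^1$.

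Finally I would set $\epsilon_0 = \min\{\epsilon_1,\epsilon_2,\epsilon_3\}$. For $0<\epsilon\leq\epsilon_0$ all three conditions of Definition \ref{def:02} hold simultaneously, so $\mathcal P(Z)_\epsilon\in\Sigma^r(S^2,S^1)$ for every $r\geq 1$, and in particular (by the sentence following Definition \ref{def:02}) $\mathcal P(Z)_\epsilon$ is structurally stable. Since the statement only requires membership in $\Sigma^r(S^2,S^1)$, this completes the argument.

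The content of the theorem is almost entirely bookkeeping once the three propositions are in hand; the only genuine points requiring care are, first, that the three $\epsilon_i$ can be chosen uniformly over the compact set $D$ (this is already handled inside the proofs of Propositions \ref{prop2.4}, \ref{prop4.8}, \ref{prop4.10} via finiteness of $D$-singularities, closed poly-trajectories, and separatrices together with compactness of $D$), and second, the verification that no new global phenomenon outside $D$ is introduced by the regularization — but away from $D$ the field $\mathcal P(Z)_\epsilon$ coincides with $\mathcal P(X)$ on $\{f\geq\epsilon\}$ and with $\mathcal P(Y)$ on $\{f\leq-\epsilon\}$, which inherit all the required properties from $X|_N, Y|_S\in\mathcal S_m$, and on the thin strip $\{|f|<\epsilon\}$ the local propositions cover every point of $D$ by open neighborhoods whose union, by compactness, is covered by finitely many. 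Thus the only mild obstacle is organizing the interplay between the local analysis near $D$ and the global structure inherited from $\mathcal S_m$, which the preceding three propositions have already done.
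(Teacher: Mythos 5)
Your argument is correct and is essentially the paper's own proof: the paper simply cites Propositions \ref{prop2.4}, \ref{prop4.8} and \ref{prop4.10}, and you assemble exactly these three, taking the minimum of the corresponding $\epsilon_i$. The extra remarks (smoothness of $\mathcal P(Z)_\epsilon$, tangency to $S^1$, agreement with $\mathcal P(X)$ and $\mathcal P(Y)$ off the strip $\{|f|<\epsilon\}$) are fine but only make explicit what the paper leaves implicit.
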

\begin{proof}
It follows from Propositions \ref{prop2.4}, \ref{prop4.8} and
\ref{prop4.10}.
\end{proof}

\section{Genericity }
\label{sec:04}

In this section we prove that the set $G_m$ is open and
 that
 each
discontinuous piecewise polynomial  vector field $Z$ of $\Omega_m$
can be approximated by fields  of $G_m$, i.e. we prove the
genericity of $G_m$.

\begin{theorem}\label{t5.1}
The set $G_m$ is open in $\Omega_m$.
\end{theorem}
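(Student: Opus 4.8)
The plan is to show that $\Omega_m \setminus G_m$ is closed, equivalently that $G_m = G_m(1)\cap G_m(2)\cap G_m(3)$ is open, by checking each of the three pieces separately and intersecting. The underlying principle throughout is that each defining condition is an open condition: it fails either because some polynomial quantity vanishes (a singularity degenerates, a Poincar\'e-map derivative equals $1$, a discriminant-type expression on $D$ vanishes) or because of a connection (a saddle connection, a non-transversal limit configuration), and each of these is destroyed by a small perturbation of the coefficients. Since $Z=(X,Y)$ is identified with a point of the affine space $\chi_m\times\chi_m \cong \bR^{2l}$, ``small perturbation'' just means nearness in $\bR^{2l}$, and all the quantities involved depend continuously (indeed polynomially or analytically, after the change of coordinates \eqref{eq:03}) on the coefficients of $X$ and $Y$ and on the base point in $S^2$.

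First I would dispatch the easy ingredient: the condition ``$X|_N\in\mathcal S_m$ and $Y|_S\in\mathcal S_m$'', common to $G_m(1)$, $G_m(2)$, $G_m(3)$, is open because $\mathcal S_m$ is open in $\chi_m$ (stated right after Definition \ref{4.0}). Next, for $G_m(1)$ it remains to see that the set of $Z$ for which every $D$-singularity of $\mathcal P(Z)$ is elementary is open. The $D$-singularities of $\mathcal P(Z)$ come in finitely many types --- fold points of $\mathcal P(X)$ or $\mathcal P(Y)$, and hyperbolic singular points of the Filippov field $F_{\mathcal P(Z)}$, including the at-infinity points $(\pm 1,0,0)$ controlled by \eqref{eq:04}. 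In each chart on $S^2$ these are cut out on the line $D$ by the vanishing of polynomials ($Xf$, $Yf$, $\det[X,Y]$) whose coefficients depend continuously on $(X,Y)$, and the elementarity is the non-vanishing of a further derivative ($X^2f$, $Y^2f$, $d(\det[X,Y]|_D)$, or the expression \eqref{eq:04}). One argues by contradiction: if $Z_n\to Z$ with each $Z_n$ having a non-elementary $D$-singularity $p_n$, then by compactness of $D$ in $S^2$ (note $D$ together with its two points at infinity is a closed arc on the sphere) a subsequence $p_n\to p$, and passing to the limit in the defining and degeneracy equations shows $p$ is a non-elementary $D$-singularity of $\mathcal P(Z)$, contradicting $Z\in G_m(1)$.

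For $G_m(2)$ the same contradiction scheme applies, but the continuity input is the more delicate one: if $Z_n\to Z$ and each $Z_n$ carries a non-elementary closed poly-trajectory $\gamma_n$, one must extract a limit closed poly-trajectory $\gamma$ of $\mathcal P(Z)$ and show it is non-elementary. For type-1 orbits, non-elementarity means the first-return derivative equals $1$; this derivative is given by an integral formula (Proposition \ref{pro:01}, and for $S^1$ the explicit \eqref{eq:05}) that varies continuously with $(X,Y)$ along a continuously varying orbit, so the limit orbit also has return derivative $1$. For type-3 orbits non-elementarity means mixed sliding/escaping arcs, which again passes to the limit. The compactness needed to extract $\gamma$ from the $\gamma_n$ is the standard one for periodic orbits of planar-type flows on the compact surface $S^2$ (using that the $C^1$ fields $\mathcal P(Z_n)$ restricted to $N$, to $S$, and the Filippov fields on the sliding/escaping arcs, all converge $C^1$). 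Finally $G_m(3)$: the set of $Z$ whose $\mathcal P(Z)$ has a saddle connection off $S^1$ is closed, because a limit of saddle connections is a saddle connection (orbits, stable/unstable separatrices, and the saddle points themselves depend continuously on the field), so its complement within $\{X|_N,Y|_S\in\mathcal S_m\}$ --- which is $G_m(3)$ --- is open.

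The hard part will be the limiting argument for poly-trajectories in $G_m(2)$ and for saddle connections in $G_m(3)$: unlike a single algebraic condition at a point, these are global objects that are concatenations of arcs of $\mathcal P(X)$, of $\mathcal P(Y)$, and of the Filippov field $F_{\mathcal P(Z)}$, with transitions at sewing points and fold points. One must control both the convergence of each constituent arc and the persistence of the combinatorial pattern (which arc follows which, and which transition occurs), ruling out degenerations such as an arc shrinking to a point, a fold point colliding with a Filippov singularity, or the number of $D$-crossings blowing up. My plan is to handle this exactly as in \cite{MS} and \cite{ST}: the bound on the number of crossings follows because $X|_N$ and $Y|_S$ lie in the open set $\mathcal S_m$ (finitely many singularities and periodic orbits, no saddle connections except on $S^1$), which forces any poly-trajectory to have a bounded, in fact uniformly bounded on a neighborhood, number of arcs; with this bound the extraction of a limiting poly-trajectory is a finite bookkeeping argument, and the analysis of possibility 1 and possibility 2 in the proof of Proposition \ref{prop4.10} is precisely the local model for why no new connections or tangencies can appear in the limit. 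I would therefore state these extraction lemmas as direct analogues of the corresponding steps in \cite{MS}, citing that reference for the routine verifications, and give in full only the passage-to-the-limit of the quantitative conditions \eqref{eq:04} and \eqref{eq:05}, which are new here because they involve the behavior at infinity.
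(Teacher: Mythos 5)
Your strategy is the contrapositive of the paper's: where the paper argues forward from $Z\in G_m$ (transversality of $\{Xf=0\}$ and $\{\det[X,Y]|_D=0\}$ with $D$ to make elementary $D$-singularities persist, persistence of the hyperbolic fixed point of the return map, continuous dependence of separatrices via \cite{HPS}, then $\delta=\min\{\delta_1,\delta_2,\delta_3\}$), you argue by extracting limits of degenerate objects along $Z_n\to Z$. The ingredients are the same (openness of $\mathcal S_m$, the at-infinity conditions \eqref{eq:04} and \eqref{eq:05}, finiteness of separatrices and poly-trajectories, compactness of the closure of $D$ in $S^2$), and your treatment of $G_m(1)$ and of \eqref{eq:04}--\eqref{eq:05} is sound and matches the paper in substance.

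The genuine gap is in the decision to prove each $G_m(i)$ open separately, and it is concentrated in two asserted limit principles: that a limit of saddle connections is a saddle connection, and that from the non-elementary closed poly-trajectories $\gamma_n$ of $Z_n$ one can extract a limit \emph{closed poly-trajectory} of $\mathcal P(Z)$. Neither is true at that level of generality, and the failures occur precisely inside the individual sets $G_m(i)$. If $Z\in G_m(3)$ has a semi-stable (non-elementary) closed poly-trajectory $\Gamma$ --- which $G_m(3)$ permits --- with one saddle separatrix spiralling onto $\Gamma$ from outside and another onto it from inside in backward time, then perturbations killing $\Gamma$ can create connections for $Z_n\to Z$ while $Z$ has none: the connections of $Z_n$ converge to the union of the two separatrices and $\Gamma$, not to a connection of $Z$. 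This is exactly the mechanism the paper excludes as ``possibility 2'' in the proof of Proposition \ref{prop4.10}, and there it is excluded by invoking $G_m(2)$, not $G_m(3)$. Symmetrically, in your $G_m(2)$ step the limit of the $\gamma_n$ may be a separatrix graph through $D$ or may collapse onto a degenerate $D$-singularity (e.g.\ a fold--fold point), configurations that $G_m(2)$ alone does not forbid; ruling them out uses $G_m(3)$ and $G_m(1)$. Also, the uniform bound on the number of $D$-crossings of the $\gamma_n$ does not follow merely from $X|_N,Y|_S\in\mathcal S_m$ and needs an argument. The repair is to prove openness only at points of the intersection, which is all Theorem \ref{t5.1} asserts: for $Z\in G_m$ every singular point, $D$-singularity and closed poly-trajectory is hyperbolic or elementary, so every separatrix of $\mathcal P(Z)$ has robust asymptotic behavior and no semi-stable object is available to mediate new connections or new non-elementary closed poly-trajectories; with that, either your limiting argument or the paper's forward argument (finitely many separatrices, \cite{HPS}, the $\delta/2$-separation device) closes the proof.
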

\begin{proof}

Let $Z=(X,Y)$ be a vector field in $G_m$. It will be proved that
there is $\delta > 0$ such that if $\widehat{Z} =
(\widehat{X},\widehat{Y}) \in \Omega _m$ and $|Z-\widehat{Z}|=$ max
$\{|X-\widehat{X}|, |Y-\widehat{Y}|\} < \delta$, then $\widehat{Z}
\in G_m$. For doing  this, we have to prove that $\widehat{Z} \in
G_m(i), i = 1,2,3$.
\begin{itemize}
\item We claim that there is a $\delta_{1} > 0$ such that if
$|Z-\widehat{Z}| < \delta_{1}$, then $\widehat{Z} \in G_m(1)$.
Indeed, as $Z=(X,Y) \in G_m(1)$, we have $X|_{N}$ and $Y|_{S} \in
\mathcal S_m$, and from the openness of $\mathcal S_m$, there is
$\delta_{1} > 0$ such that if $|Z-\widehat{Z}| < \delta_{1}$, then
$\widehat{X}|_{N}$ and $\widehat{Y}|_{S} \in \mathcal S_m$. Now,
it remains to prove that if $p$ is an elementary $D$-singularity
of $\mathcal P(Z)$ and $\widehat{Z}$ is close to $Z$, then there
is a point $\widehat{p}$ near $p$ which is an elementary
$D$-singularity of $\mathcal P(\widehat{Z})$.

Let $p$ be a fold of $Z$. We can suppose that $Xf(p) = 0$, $X^2f(p)
\neq 0$ and $Yf(p) \neq 0$. As $Xf(p) = 0$ and $X^2f(p) \neq 0$, the
curve $\{Xf = 0\}$ crosses transversally the curve $D$ at the point
$p$, and, by continuity, the same occurs to the curve
$\{\widehat{X}f = 0\}$, for $\widehat{Z}$ near $Z$. This means that
there is $\widehat{p}$ near $p$ such that $\widehat{X}f(\widehat{p})
= 0$ and $\widehat{X}^2f(\widehat{p}) \neq 0$. If $\delta_{1}$ is
small enough, we can assume that it is also true that
$\widehat{Y}f(\widehat{p}) \neq 0$. So, $\widehat{p}$ is a fold of
$\widehat{Z}$. Hence, as there are no folds of $\mathcal P(Z)$ in
$S^1$, it follows that if $p$ is a fold of $\mathcal P(Z)$ then
$\widehat{p}$ near $p$ is a fold of $\mathcal P(\widehat{Z})$.

Let $p$ be a hyperbolic singularity of $F_{Z}$. We have that
\linebreak $Xf(p)Yf(p)< 0$, $det[X,Y](p) = 0$ and
$d(det[X,Y])|_{D}(p) \neq 0$. Similarly  to the fold case, the
curve $\{det[X,Y]|_{D}(p) = 0\}$ crosses transversally the curve
$D$ at the point $p$, and the same is true for $\widehat{Z}$ near
$Z$.
 So, there is a $\widehat{p}$ near $p$ such that
$\widehat{X}f(\widehat{p})\widehat{Y}f(\widehat{p}) < 0$,
$det[\widehat{X},\widehat{Y}](\widehat{p}) = 0$ and
$d(det[\widehat{X},\widehat{Y}])|_{D}(\widehat{p}) \neq 0$. This
implies that $\widehat{p}$ is a hyperbolic singular point of
$F_{\widehat{Z}}$. As $\delta_{1}$ can be chosen so that none of the
involved function change sign, and therefore $\widehat{p}$ is a
singularity of the same kind as $p$. As the D-singularities are
isolated, $\delta_{1}$ can be chosen strictly positive. We have that
$\widehat{Z}$ does not have other singularities. This is due to the
openness of the conditions that exclude this type of singularities.

Now, if $p\in S^1\cap D$ is a hyperbolic singularity of
$F_{\mathcal P(Z)}$ then, as $S^1$ is invariant by $\widehat{Z}$,
by the previous case, it follows that $p$ is a hyperbolic
singularity of $F_{\mathcal P(\widehat{Z})}$. Thus, $\widehat{Z}
\in G_m(1)$.

\item We claim that there is a $\delta_{2} > 0$ such that if
$|Z-\widehat{Z}| < \delta_{2}$, then $\widehat{Z} \in G_m(2)$.

As $Z=(X,Y) \in G_m(2)$, we have $X|_{N}$ and $Y|_{S} \in \mathcal
S_m$, and each closed poly-trajectory of $Z$ is elementary.

Let $\gamma$ be an  elementary closed poly-trajectory of type 1 of
$Z$. Associated to $\gamma$ there is a first return map $\eta$,
differentiable and such that $\eta^\prime(p) \neq 1$, for $p \in
\gamma$. This means that $p$ is a hyperbolic fixed point of the
diffeomorphism $\eta$. So, there is a number $k > 0$ such that if
$\mu$ is a diffeomorphism with $|\eta-\mu|_{1} < k$, then $\mu$ has
a hyperbolic fixed point $p_{\mu}$ near $p$. Then, it is enough to
choose $\delta_{2} > 0$ small as necessary for if $|Z-\widehat{Z}| <
\delta_{2}$, the first return map $\widehat{\eta}$ associated to
$\widehat{Z}$ satisfies $|\eta-\widehat{\eta}|_{1} < k$. So,
$\widehat{\eta}$ has a hyperbolic fixed point $\widehat{p}$ which
corresponds to an elementary closed poly-trajectory of type 1 of
$\widehat{Z}$. In the same way, if $S^1$ is a poly-trajectory of
$\mathcal P(Z)$ and so it is  of type $1$, as $S^1$ is invariant by
$\mathcal P(\widehat{Z})$, it follows that $S^1$ is  also a
poly-trajectory of $\mathcal P(\widehat{Z})$ and it is  therefore of
type $1$.

Let $\gamma$ be an  elementary closed poly-trajectory of type 3 of
$Z$. By the continuity of the functions involved, it can be shown
that  there is $\delta_{2} > 0$ such that if $|Z-\widehat{Z}| <
\delta_{2}$, $\widehat{Z}$ has an  elementary closed
poly-trajectory $\widehat{\gamma}$ of type 3 near $\gamma$.

As the number of poly-trajectories is finite, we can choose
$\delta_{2} > 0$ small enough so that  $\widehat{Z}$ has only
elementary poly-trajectories.  So, we have proved that $\widehat{Z}
\in G_m(2)$.

\item We claim that there is a $\delta_{3} > 0$ such that if
$|Z-\widehat{Z}| < \delta_{3}$, then $\widehat{Z} \in G_m(3)$.

Indeed, as $Z=(X,Y) \in G_m(3)$, we have $X|_{N}$ and $Y|_{S} \in
\mathcal S_m$ and there is $\delta_{3} > 0$ such that if
$|Z-\widehat{Z}| < \delta_{3}$, then $\widehat{X}|_{N}$ and
$\widehat{Y}|_{S} \in \mathcal S_m$. So, $\widehat{X}$ and
$\widehat{Y}$ do not have separatrix connections in $N$ and in $S$,
respectively. It remains to analyze the appearance of a connection
with at least one point in $D$. We know that $\mathcal P(Z)$ has
only a finite number of separatrices and does not have a connection
on $S^2$ unless they are contained in $S^1$. As $\mathcal
P(\widehat{Z})$ has a unique separatrix corresponding to each
separatrix of $\mathcal P(Z)$ (as follows from the uniqueness and
continuous dependence of invariant manifolds of equilibrium of
Vector Fields and fixed points of Diffeomorphisms, see \cite{HPS}),
it is easy to show that $\delta_{3}
> 0$ can be chosen so that $\mathcal P(\widehat{Z})$ does not have
separatrix connections on $S^2$ unless they are contained in $S^1$.
In this way, we have established that $\widehat{Z} \in G_m(3)$.
\end{itemize}

To finish the proof, we can take $\delta =$ min $\{\delta_{1},
\delta_{2}, \delta_{3}\}$, then if $\widehat{Z} =
(\widehat{X},\widehat{Y}) \in \Omega_m$ and $|Z-\widehat{Z}|=$ max
$\{|X-\widehat{X}|, |Y-\widehat{Y}|\} < \delta$, then $\widehat{Z}
\in G_m$. As a consequence, the set $G_m$ is open in $\Omega_m$.
\end{proof}

\begin{definition} \label{5.1}
Assume that $Z = (X,Y) \in \Omega_m$. For each pair $(\sigma, v) \in
\mathbb R^2\times \mathbb R^2$, let $Z_{\sigma,v}$ be the field $Z$
translated by $v=(v_{1},v_{2})$ and rotated by $\sigma =
(\sigma_{1},\sigma_{2})$; this means that
$$Z_{\sigma,v} = \mathcal R_{\sigma}(Z+v) = (\mathcal R_{\sigma_{1}}(X+v),\mathcal R_{\sigma_{2}}(Y+v)),$$
 where
  $$\mathcal R_{\sigma_{1}}(X+v) = \left( \begin{array}{cc} \cos \sigma_{1}& -\sin \sigma_{1}\\
\sin \sigma_{1}& \cos \sigma_{1}\\ \end{array} \right) \left(
\begin{array}{c} P_{1}+v_{1}\\ Q_{1}+v_{2}\\ \end{array}
\right).$$
\end{definition}

\begin{theorem}
The set $G_m$ is dense in $\Omega_m$.
\end{theorem}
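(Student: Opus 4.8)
The plan is to show that an arbitrary $Z = (X,Y) \in \Omega_m$ can be perturbed, within $\Omega_m$, into $G_m = G_m(1) \cap G_m(2) \cap G_m(3)$, and then combine this with the openness result (Theorem~\ref{t5.1}) to conclude density. Since $G_m$ is open, it suffices to exhibit, for each $Z$ and each $\eta > 0$, some $\widehat Z \in G_m$ with $|Z - \widehat Z| < \eta$. The perturbations I would use are exactly the translations and rotations $Z_{\sigma,v}$ of Definition~\ref{5.1}: for $(\sigma,v)$ near $(0,0)$ the field $Z_{\sigma,v}$ is $\eta$-close to $Z$, and the map $(\sigma,v)\mapsto Z_{\sigma,v}$ is smooth. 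The key point is that $S^1$ is invariant under $\mathcal P(Z_{\sigma,v})$ for every $(\sigma,v)$ (translations and rotations do not change the degree-$m$ homogeneous parts up to the rotation, which acts linearly on them), so the behavior at infinity is controlled separately; the bulk of the argument concerns the finite plane and the discontinuity line $D$.

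**Step 1: achieving $X|_N, Y|_S \in \mathcal S_m$.** First I would invoke the known density of $\mathcal S_m$ in $\chi_m$ (stated just after Definition~\ref{4.0}, cf.\ \cite{S1}): a generic rotation $\mathcal R_{\sigma_1}(X+v)$ of $X$ lies in $\mathcal S_m$, and similarly for $Y$. One has to check that the parameters achieving this for $X$ and for $Y$ can be chosen compatibly — but since the translation $v$ is shared while the rotations $\sigma_1,\sigma_2$ are independent, and since the bad sets in parameter space are of the first Baire category (or measure zero), the complement is dense, so a common small $(\sigma_1,\sigma_2,v)$ works. Thus after a first small perturbation we may assume $X|_N, Y|_S \in \mathcal S_m$, and all three of $G_m(1), G_m(2), G_m(3)$ now only impose conditions on the $D$-singularities, closed poly-trajectories, and saddle connections of $\mathcal P(Z)$.

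**Step 2: making $D$-singularities elementary ($G_m(1)$) and removing saddle connections ($G_m(3)$).** For the finitely many $D$-singularities, I would show that the nondegeneracy conditions in Definitions~\ref{2.2}, \ref{2.3} (transversality of $\{Xf=0\}$, $\{Yf=0\}$, and $\{\det[X,Y]|_D=0\}$ to $D$, and the sign conditions $d(\det[X,Y]|_D)\neq 0$) fail only on a codimension-$\geq 1$ subset of the $(\sigma,v)$-parameter space; the same holds for the two bad singularities at infinity, where by \eqref{eq:04} ellementarity at $(\pm1,0,0)$ is the non-vanishing of a polynomial in the coefficients of the degree-$m$ parts, hence an open dense condition on $\sigma$. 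A further small perturbation thus lands in $G_m(1)$. For $G_m(3)$, I would argue as in Peixoto's classical density theorem: each saddle separatrix of $\mathcal P(Z)$ that is not contained in $S^1$ can be broken by an arbitrarily small rotation/translation (standard closing-lemma-free argument for polynomial, hence analytic, fields, using that the separatrices vary continuously and that the "connection" condition is a countable union of codimension-$\geq1$ conditions in parameter space); the poly-trajectory saddle connections through $D$ are handled by the same transversality perturbation, tilting $X$ or $Y$ slightly near the relevant fold or sliding-segment point. Since connections form a closed, nowhere-dense obstruction, a generic small perturbation avoids them.

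**Step 3: making closed poly-trajectories elementary ($G_m(2)$), and conclusion.** For type-1 closed poly-trajectories the first return map $\eta$ must satisfy $\eta'\neq 1$; this is achieved by a standard perturbation that changes $\oint \mathrm{div}$ along the orbit (Proposition~\ref{pro:01} gives the explicit derivative), and for $S^1$ itself it is condition \eqref{eq:05}, again a codimension-one condition on the degree-$m$ coefficients. For type-3 poly-trajectories one perturbs so that the sliding/escaping arcs do not coexist of mixed type along a single closed trajectory. The main obstacle — and the place I would spend the most care — is the bookkeeping showing all these bad sets in the parameter space $\mathbb R^2\times\mathbb R^2$ are simultaneously meager (equivalently: their complement is dense), so that one genuinely small $(\sigma,v)$ can be chosen to satisfy $G_m(1)$, $G_m(2)$, and $G_m(3)$ at once; here one must also ensure the perturbations needed for one condition do not destroy another, which is why the argument is organized so that the $\mathcal S_m$-condition (Step 1) is forced first and is itself open, and $G_m(1),(2),(3)$ only add finitely many further codimension-one exclusions on top of an already generic field. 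Once such a $(\sigma,v)$ is produced, $Z_{\sigma,v}\in G_m$ is $\eta$-close to $Z$, proving $G_m$ dense in $\Omega_m$; together with Theorem~\ref{t5.1} this gives the full genericity statement, and via Theorem~\ref{t4.1} the structural stability of $\mathcal P(Z)_\epsilon$ for a generic $Z$.
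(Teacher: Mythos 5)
There is a genuine gap, and it is precisely at the place where the paper is most careful: the behavior at infinity. Your Step 1 asserts that, because $\mathcal S_m$ is dense in $\chi_m$, a generic member $\mathcal R_{\sigma_1}(X+v)$ of the rotation--translation family lies in $\mathcal S_m$. That inference is not valid (density in the full coefficient space says nothing about genericity inside a $3$- or $4$-parameter subfamily), and the statement itself is false in general: a translation does not change the degree-$m$ homogeneous part $(P_{1,m},Q_{1,m})$ at all, and a rotation only rotates it, so the singular points of $\mathcal P(Z_{\sigma,v})$ on $S^1$ and their degeneracies are essentially rigid under this family. Concretely, if $P_{1,m}$ and $Q_{1,m}$ have a common real linear factor, i.e.\ a common zero at some direction $\theta^*$, then $A_{1,m}(\theta^*)=R_{1,m}(\theta^*)=0$, and since $\tilde A_m=\cos\sigma_1 A_m+\sin\sigma_1 R_m$, $\tilde R_m=\cos\sigma_1 R_m-\sin\sigma_1 A_m$, the point $(\theta^*,0)$ remains a singular point at infinity with vanishing radial eigenvalue $-\tilde R_m(\theta^*)$ for \emph{every} $(\sigma_1,v)$; no member of your family is in $\mathcal S_m$. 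The same rigidity affects condition \eqref{eq:04} at $(\pm1,0,0)$ when one of the vectors $(P_{k,m}(1,0),Q_{k,m}(1,0))$ vanishes, so your claim in Step 2 that elementarity at infinity is ``an open dense condition on $\sigma$'' also needs a prior normalization you have not secured. Since your Steps 2 and 3 are built on top of Step 1 (``finitely many further codimension-one exclusions on top of an already generic field''), the argument does not close.

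This is exactly why the paper's proof is organized differently: it \emph{first} perturbs $X$ and $Y$ with arbitrary coefficient perturbations, quoting the continuous polynomial case (\cite{G}, \cite{S1}), to make all singularities of $\mathcal P(X)$, $\mathcal P(Y)$ on $S^1$ hyperbolic and to remove singularities at $(\pm1,0,0)$; then it uses two \emph{explicit} degree-$m$ perturbations, $\epsilon x^m\frac{\partial}{\partial x}$ or $\epsilon x^m\frac{\partial}{\partial y}$ to achieve \eqref{eq:04}, and $\epsilon(x^2+y^2)^k\bigl(x\frac{\partial}{\partial x}+y\frac{\partial}{\partial y}\bigr)$ (which leaves $A_{1,m}$ unchanged and shifts $R_{1,m}$ by $\epsilon$) to achieve \eqref{eq:05}; and only \emph{after} the structure on $S^1$ is fixed does it bring in the family $Z_{\sigma,v}$ of Definition \ref{5.1}, checking via the transformation formulas for $A_m,R_m$ that small $(\sigma,v)$ preserves the conditions at infinity, and invoking the measure-zero argument of the proof of Theorem $25$ of \cite{MS} together with \cite{S} only for the degeneracies in $S^2\setminus S^1$ and on $D$. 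To repair your proposal you would need to add such a preliminary, non-rotational perturbation step (or an independent in-family genericity argument at infinity, which, as the example above shows, cannot exist); your remaining steps for the finite-plane conditions are then in the same spirit as the paper's appeal to \cite{MS} and \cite{S}, though stated at the level of assertion rather than proof.
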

\begin{proof}
Let $Z=(X,Y)$ be a vector field of  $\Omega_m$ with $X=(P_1,Q_1)$
and $Y=(P_2,Q_2)$. If $\mathcal P(Z)$ has singularities in $S^1$ we
can suppose that the singularities of $\mathcal P(X)$ and $\mathcal
P(Y)$ in $S^1$ are all hyperbolic and these vector fields do not
have singular points in $(\pm1,0,0)$. Otherwise, from the continuous
case (see \cite{G} and \cite{S1}), we can approximate $X$ and $Y$ by
other two vector fields with such properties. Now if some of the
points $(\pm1,0,0)\in S^1\cap D$ are not hyperbolic singularities of
$F_{\mathcal P(Z)}$, by \eqref{eq:04}, we can make these points
hyperbolic by  adding to $X$ or $Y$ a perturbation of type
\[
\epsilon x^m\frac{\partial }{\partial x}+0\frac{\partial
}{\partial y} \hspace{0.4cm}\mbox{or}\hspace{0.4cm}
0\frac{\partial }{\partial x}+\epsilon x^m\frac{\partial
}{\partial y}.
\]

Suppose that $S^1$ is a closed poly-trajectory of type $1$ of
$\mathcal P(Z)$ which is not elementary, i.e. by \eqref{eq:05}
\[
\int^{\pi}_0\left(\frac{R_{1,m}(\theta)}{A_{1,m}(\theta)}+\frac{R_{2,m}(\theta)}{A_{2,m}(\theta)}\right)d\theta=
0.
\]
Then adding to $X$ the perturbation
\[
\epsilon (x^2+y^2)^kx\frac{\partial }{\partial
x}+\epsilon(x^2+y^2)^ky\frac{\partial }{\partial y},
\]
with $m=2k+1$, it follows that the above equality becomes
\[
\int^{\pi}_0\left(\frac{R_{1,m}(\theta)}{A_{1,m}(\theta)}+\frac{R_{2,m}(\theta)}{A_{2,m}(\theta)}+\frac{\epsilon}{A_{1,m}(\theta)}\right)d\theta=
\int^{\pi}_0\frac{\epsilon}{A_{1,m}(\theta)}d\theta\neq 0.
\]
This implies that $S^1$ can be made elementary.

Notice  that if $X\in \chi_m$ then to  $\tilde{X}=\mathcal
R_{\sigma_1} (X+v)$, $(\sigma_1,v)\in \mathbb R\times \mathbb
R^2$,
\[
\begin{array}{lcl}
\tilde{A}_m(\theta) & = & \cos\sigma_1 A_m(\theta)+\sin\sigma_1
R_m(\theta), \\
\tilde{R}_m(\theta) & = & \cos\sigma_1 R_m(\theta)-\sin\sigma_1
A_m(\theta).
\end{array}
\]
Note also that we can write the condition \eqref{eq:04} as
\[
R_{1,m}(0)A_{2,m}(0)-A_{1,m}(0)R_{2,m}(0)\neq 0.
\]
Hence, as the singularities of $\mathcal P(Z)$ in
$S^1\setminus\{(\pm1,0,0)\}$ correspond by \eqref{eq:01} the points
$(\theta,0)$ such that $A_{k,m}(\theta)=0$ and they are hyperbolic
if $A_{k,m}'(\theta)R_{k,m}\neq 0$, $k=1,2$, it follows that if
$(\sigma,v)$ is small enough then $S^1$ is still either an
elementary closed poly-trajectory  of $\mathcal P(Z_{\sigma,v})$ or
all singularities of $\mathcal P(Z_{\sigma,v})$ in $S^1$ are
hyperbolic. Now, by the continuous case (see \cite{S}) and from the
proof of Theorem $25$ of \cite{MS}, we have that the set of
$(\sigma, v) \in \mathbb R^2 \times \mathbb R^2$ such that
$Z_{\sigma,v}$ has at least one non hyperbolic singularity, one non
elementary $D$-singular point, one non hyperbolic closed orbit, one
non elementary poly-trajectory or one connection of saddle
separatrizes of $\mathcal P(Z)$ in $S^2\setminus S^1$, has null
Lebesgue measure in $\mathbb R ^4$.

This finishes the proof of the theorem.
\end{proof}

\addcontentsline{toc}{chapter}{Bibliografia}

\end{document}